\numberwithin{equation}{section}
\theoremstyle{plain}
\newtheorem{theorem}{Theorem}[section]
\newtheorem{lemma}[theorem]{Lemma}
\newtheorem{cor}[theorem]{Corollary}
\theoremstyle{definition}
\begin{document}
\newcommand{\T}{\mathbb{T}}
\newcommand{\R}{\mathbb{R}}
\newcommand{\Q}{\mathbb{Q}}
\newcommand{\N}{\mathbb{N}}
\newcommand{\Z}{\mathbb{Z}}
\newcommand{\tx}[1]{\quad\mbox{#1}\quad}

\title{On Reliability of Stochastic Networks}
\author{Ryan T. White\\
{\color{blue}rwhite2009@fit.edu}
\and
Jewgeni H. Dshalalow\\
{\color{blue}eugene@fit.edu}}
\date{}
\maketitle

\vspace{-1cm}
\begin{center}
Department of Mathematical Sciences\\
College of Science\\
Florida Institute of Technology\\
Melbourne, Florida 32901, USA
\end{center}

\begin{abstract}
In the recent times we hear increasingly often about cyber attacks on various commercial and strategic sites that manage to escape any defense. In this article we model such attacks on networks via stochastic processes and predict the time of a total or partial failure of a network including the magnitude of losses (such as the number of compromised nodes, lost weights, and a loss of other associated components relative to some fixed thresholds). To make such modeling more realistic we also assume that the information about the attacks is delayed as per random observations. We arrive at analytically and numerically tractable results demonstrated by examples and comparative simulation.

\noindent{\bf AMS (MOS) Subject Classification.} 60G50, 60G51, 60G52, 60G55, 60G57, 60K05, 60K35, 60K40, 60G25, 90B18, 90B10, 90B15, 90B25.
\end{abstract}

\section{INTRODUCTION}\label{s1}
In this paper, we consider a model of a large-scale stochastic network under a series of cyber attacks wherein successive random subgraphs are compromised (destroyed or otherwise prevented from normal operation) upon random time increments. With each node is an associated random weight representing the value of the node. Furthermore, we do not learn in real-time the extent of the damage: rather, the status of the cumulative damage is ascertained only upon an independent delayed renewal observation process.

Random graphs are very common in modeling various types of networks \lbrack 6-7, 23, 27, 29-30, 32, 36\rbrack, but few had cyber-crime as a focus \lbrack 28, 31\rbrack. The classical random graphs of Erd\"os and R\a`enyi, $G(n,M)$, consist of $n$ vertices with $M$ edges chosen uniformly at random from the set of all possible adjacencies among the $n\,$vertices \lbrack 17-18\rbrack, but we consider rather pre-existing random graphs with randomly weighted nodes. As such, we model each individual attack as removing some random number of nodes along with their total weight.

Models of cyber attacks sometimes consider a viral process spreading from one node to another according to a branching process. While this can lead to valuable insights in some domains, we aim to capture the incapacitation of institutional network assets under existing defensive strategies (firewalls, quarantining affected machines, and so forth), which result in uncertain graphs on which viral attacks may spread and the loss of operational capacity of potentially non-infected nodes, rendering the estimates of infected nodes an incomplete picture of losses.

Also, there may be multiple sources of viral and non-viral attacks (e.g. distributed denial of service). Further, batches of nodes are commonly lost because (1) some viral attacks, once beyond a firewall, can spread to an entire subnet very quickly \lbrack 8\rbrack, (2) viral detection often prompts administrators to quarantine subnets, (3) attacks may knock out a hub necessary for proper operation of adjacent nodes (e.g. a router) \lbrack 4\rbrack, and (4) many communication networks can be characterized by scale-free networks, yielding clusters of highly interconnected groups of nodes \lbrack 22, 28, 31\rbrack\ associated with particular subnets, increasing the threat of practically immediate internal contamination.

The primary target of our analysis will be the process in the vicinity of the first time that either a cumulative node loss component reaches a threshold $M$ or a cumulative weight loss component reaches a threshold $V$. Formally, this will be an exit time of a multivariate marked point process with mutually dependent components from the open rectangle $[0,M)\times[0,V)$. As such, we draw upon the extensive literature on fluctuation theory \lbrack 19-20, 24-26, 33-34\rbrack\ and properties of exit times of stochastic processes \lbrack 1-3, 10-16, 21-22, 35\rbrack.

The crossing of these critical thresholds correspond to points at which network activity undergoes some important change in operation, whether it corresponds to the detection of malicious attacks as opposed to benign losses (e.g. temporary maintenance or ordinary hardware failures), the point at which the situation dictates a change in defensive policy, or the destruction of the network. Thus, we want to predict when such crossings will happen, the values of the components of the process at the crossing, and the like.

Now, since the present paper focuses on application of previously obtained general results \lbrack 10,16\rbrack\ for random walks to the prediction of a failure of networks, we show in Section 3 that if the attacks form a marked Poisson process, special cases under delayed observations are tractable and they agree with results obtained by direct simulation.
\section{EXIT TIME MODEL FOR NETWORKS}\label{s2}
Let $\left(\Omega,\mathcal{F}\left(\Omega\right),P\right)$ be a probability space and let
\begin{equation}
\left(\mathcal{N}\otimes\mathcal{W}\otimes\mathcal{P}\right)=\sum_{k\geq 0}\left(n_k,w_k,\boldsymbol{p}_k\right)\varepsilon_{t_k}
\end{equation}
\noindent(where $\varepsilon_c\,$is a Dirac point measure) be a marked atomic random measure describing the evolution of damage to a network where attacks arrive upon each $t_k$, at which $n_k$ nodes are incapacitated and their associated weights, $w_k=w_{k1}+...+w_{kn_k}$ ($w_{kj}$ represents the nonnegative real weight of the $j$th node destroyed in the $k$th attack).

In addition, a real random $n$-vector representing the change in passive components for each node lost, yielding the last component of the mark: $\boldsymbol{p}_k=\boldsymbol{p}_{k1}+...+\boldsymbol{p}_{kn_k}$, where $\boldsymbol{p}_{kj}$ represents the change in the $n$ passive components due to loss of the $j$th node of the $k$th attack). Altogether, we have
\begin{equation}
\left(n_k,w_k,\boldsymbol{p}_k\right):\Omega\rightarrow\mathbb{N}\times\mathbb{R}_0\times\mathbb{R}^n
\end{equation}

We assume the increments $\left(n_k,w_k,\boldsymbol{p}_k\right)$ are jointly independent and identically distributed (iid) for $k\in\mathbb{N}$ and each is independent of $\left(n_0,w_0,\boldsymbol{p}_0\right)$, though components are mutually dependent.

Furthermore, we assume the common probability-generating function (PGF) of each $n_k$ is $g\left(z\right)$, the common Laplace-Stieltjes transform (LST) of each $w_{kj}$ is $l\left(v\right)$, and the common $n$-variate moment-generating function of each $p_{kj}$ is $m\left(\boldsymbol{\alpha}\right)$. In addition, we assume each $w_k$ and $\boldsymbol{p}_k$ are conditionally independent given $n_k$.

We define the joint transform of the increments as
\begin{equation}
\gamma\left(z,v,\boldsymbol{\alpha}\right)=E\left[z^{n_k}e^{-vw_k}e^{\boldsymbol{\alpha\cdot p}_k}\right]
\end{equation}
where $k\in\mathbb{N},\,|z|\leq 1,v\in\mathbb{R},\,\alpha\in\mathbb{R}^m$.

Using the transforms above with iterated expectation and the conditional independence of $w_k$ and $\boldsymbol{p}_k$ given $n_k$, we can simplify this as
\begin{equation}
\gamma\left(z,v,\boldsymbol{\alpha}\right)=g\left[zl\left(v\right)m\left(\boldsymbol{\alpha}\right)\right]
\end{equation}

Consider the continuous time parameter process associated with the random measure $\mathcal{N}\otimes\mathcal{W}\otimes\mathcal{P}$ introduced in (2.1),
\begin{equation}
\left(N\left(t\right),W\left(t\right),\mathfrak{P}\left(t\right)\right)=\left(\mathcal{N}\otimes\mathcal{W}\otimes\mathcal{P}\right)\left[0,t\right]
\end{equation}

However, we will suppose the process is observed only upon the following delayed renewal process (rather than in real-time)
\begin{equation}
\mathcal{T}=\sum_{k\geq 0}\varepsilon_{\tau_k}
\end{equation}
\noindent where $\Delta_k=\tau_k-\tau_{k-1}$ for $k\in\mathbb{N}_0$ and $\tau_{-1}=0,\,$where each $\Delta_k$ is iid for $k\geq 1$, and independent of $\Delta_0$.

Next, we define the increments of the process upon the observations at each $\tau_k$,
\begin{align}
\left(X_0,Y_0,\boldsymbol{\pi}_0\right)& =\left(\mathcal{N}\otimes\mathcal{W}\otimes\mathcal{P}\right)\left[0,\tau_0\right]\\
\left(X_k,Y_k,\boldsymbol{\pi}_k\right)& =\left(\mathcal{N}\otimes\mathcal{W}\otimes\mathcal{P}\right)\left(\left.\tau_{k-1},\tau_k\right]\right.
\end{align}
\noindent where $k\in\mathbb{N}$. We will also denote the values of the cumulative process upon the observations
\begin{equation}
N_k=N\left(\tau_k\right),\,W_k=W\left(\tau_k\right),\,\boldsymbol{\mathfrak{P}}_k=\mathfrak{P}\left(\tau_k\right)
\end{equation}
Therefore, we can write the increments as
\begin{align}
X_k& =N_k-N_{k-1}\\
Y_k& =W_k-W_{k-1}\\
\boldsymbol{\pi}_k& =\boldsymbol{\mathfrak{P}}_k-\boldsymbol{%
\mathfrak{P}}_{k-1}
\end{align}
\noindent for $k\in\mathbb{N}_0$, with$\,N_{-1}=W_{-1}=0,\,\boldsymbol{\pi}_{-1}=\boldsymbol{0}$.

With the delayed observation, the joint functionals of the increments depend on the amount of time since the previous observation because some nonnegative integer number of attacks will occur during each observation epoch $\left(\left.\tau_{k-1},\tau_k\right]\right.$ whereas previously we knew the increment $\left(n_k,w_k,\boldsymbol{p}_k\right)\,$corresponded to exactly 1 attack. As such, the modified functional of the increment is
\begin{equation}
\gamma_k\left(z,v,\theta,\boldsymbol{\alpha}\right)=E\left[z^{X_k}e^{-vY_k}e^{-\theta\Delta_k}e^{\boldsymbol{\alpha\cdot\pi}_k}\right],\,k\in\mathbb{N}_0
\end{equation}
\noindent where $\left|z\right|\leq 1,\,\,$Re$\left(v\right)\geq 0,\,$Re$\left(\theta\right)\geq 0,\,\boldsymbol{\alpha}\in\mathbb{C}^n$. Note that since the increments (other than the initial one) are identically distributed, we will have just two unique joint increment transforms
\begin{align}
\gamma_0\left(z,v,\theta,\boldsymbol{\alpha}\right)& =E\left[z^{X_0}e^{-vY_0}e^{-\theta\Delta_0}e^{\boldsymbol{\alpha\cdot\pi}_0}\right]\\
\gamma\left(z,v,\theta,\boldsymbol{\alpha}\right)& =E\left[z^{X_1}e^{-vY_1}e^{-\theta\Delta_1}e^{\boldsymbol{\alpha\cdot\pi}_1}\right]
\end{align}
\noindent where (2.15) is equal to $\gamma_k$ for all $k\in\mathbb{N}$.

We will be interested in the first observation epoch when the cumulative node loss component crosses a fixed threshold $M\in\mathbb{N}$, or the cumulative weight loss component crosses a threshold $V\in\mathbb{R}_{+}$, whichever comes first. Then, we define the first observed passage index
\begin{equation}
\rho=\inf\left\{n:\left(N_n,W_n\right)\notin[0,M)\times[0,V)\right\}
\end{equation}
\noindent while $\tau_\rho$ is called the first observed passage time. We refer to $N$ and $W\,$as the active components of the process (whereas $\boldsymbol{\mathfrak{P}}$ and time are passive).

Throughout the rest of this article, we consider various marginal and semi-marginal variants of the joint functional
\begin{equation}
\varPhi\left(y,z,u,v,\eta,\theta,\boldsymbol{\alpha},\boldsymbol{\beta}\right)
=E\left[y^{N_{\rho-1}} z^{N_\rho}e^{-uW_{\rho-1}-vW_\rho}e^{-\eta\tau_{\rho-1}-\theta\tau_\rho}e^{\boldsymbol{\alpha\cdot\mathfrak{P}}_{\rho-1}+\boldsymbol{\beta\cdot\mathfrak{P}}_\rho}\right]
\end{equation}
\noindent of the cumulative number of nodes lost, cumulative weight lost, time, and additional components at the observation before (i.e. the pre-observed passage time) and after (i.e. the first observed passage time) the first threshold crossing under special assumptions.

\section{APPLICATION TO POISSONIAN ATTACK PROCESS}\label{s3}

In this section we will derive analytically tractable probabilistic results for the network exit time model under delayed observation for a special case. First, we assume attacks occur according to a marked Poisson process,
\begin{equation}
\Pi=\sum_{k\geq 1}\left(n_k,w_k,\boldsymbol{p}_k\right)\varepsilon_{t_k}
\end{equation}
\noindent where $\left\{t_1,t_2,...\right\}\,$ is a Poisson point process of rate $\lambda$ on the nonnegative real line with the marks $\left(n_k,w_k,\boldsymbol{p}_k\right)$ under observation by the delayed renewal process $\left\{\tau_0,\tau_1,\tau_2,...\right\}$ as explained in Section 2.

We will derive expressions for marginal and semi-marginal versions of the functional $\varPhi$ as well as show how they lead to probabilistic results, such as moments and distributions, for the values of each component of the process in the random vicinities of the exit time.

By assuming Poisson attack occurrences, we can make the joint transforms of the increments of the process (2.14-2.15) more explicit (see Lemma A.1 from Appendix A):
\begin{align}
\gamma_0\left(z,v,\theta,\boldsymbol{\beta}\right)& =L_0\left[\theta+\lambda-\lambda g\left(zl\left(v\right)m\left(\boldsymbol{\beta}\right)\right)\right]\\
\gamma\left(z,v,\theta,\boldsymbol{\beta}\right)& =L\left[\theta+\lambda-\lambda g\left(zl\left(v\right)m\left(\boldsymbol{\beta}\right)\right)\right]
\end{align}
\noindent where $L_0$ and $L$ are the Laplace-Stieltjes transforms of $\Delta_0\,$ and $\Delta_k\,$(for $k\in\mathbb{N}$), respectively.

The following transforms will be a useful tool through which we derive analytically tractable results. Denote
\begin{equation}
D_{pq}=\mathcal{L}\mathcal{C}_p\circ\mathcal{D}_q
\end{equation}
\noindent Here $\mathcal{L}\mathcal{C}_p$ is the Laplace-Carson transform:
\begin{equation}
\mathcal{L}\mathcal{C}_p\left(\cdot\right)\left(w\right)=w\int_{p=0}^\infty e^{-wp}\left(\cdot\right)dp,\text{Re}\left(w\right)>0
\end{equation}
\noindent with the inverse
\begin{equation}
\mathcal{L}\mathcal{C}_w^{-1}\left(\cdot\right)\left(p\right)=\mathcal{L}_w^{-1}\left(\cdot\frac{1}{w}\right)\left(p\right)
\end{equation}
\noindent where $\mathcal{L}_w^{-1}$ is the inverse of the Laplace transform. The operator $\mathcal{D}_q$ is defined as
\begin{equation}
\mathcal{D}_q\left(f\right)\left(x\right)=\left(1-x\right)\sum_{q=0}^\infty x^qf\left(q\right),\left\|x\right\|<1
\end{equation}
\noindent where $\left\{f\left(q\right)\right\}$ is a sequence, with the inverse (for $r\in\mathbb{N}$)
\begin{equation}
\mathcal{D}_x^r\left(\varphi\left(x,w\right)\right)=\lim_{x\rightarrow 0}\frac{1}{r!}\frac{\partial^r}{\partial x^r}\left[\frac{1}{1-x}\varphi\left(x,w\right)\right]
\end{equation}
\noindent The inverse of D$_{pq}$ is denoted
\begin{equation}
\mathcal{D}_{xw}^{-1}\left(\cdot\right)=\mathcal{L}\mathcal{C}_w^{-1}\circ\mathcal{D}_x^{q-1}\left(\cdot\right)\left(p,q\right)
\end{equation}
\noindent According to [10], under the assumption $\left(N\left(0\right),W\left(0\right),\boldsymbol{\mathfrak{P}}\left(0\right)\right)=\left(0,0,\textbf{0}\right)$,
\begin{equation}
\begin{split}
\varPhi=\varPhi\left(1,z,0,v,0,\theta,\boldsymbol{0},\boldsymbol{\beta}\right)&=E\left[z^{N_\rho}e^{-vW_\rho}e^{-\theta\tau_\rho}e^{\boldsymbol{\beta\cdot\mathfrak{P}}_\rho}\right]\\
&=1-\left(1-\gamma\right)\mathcal{D}_{xw}^{-1}\left[\frac{1}{1-\overline{\gamma}}\right]\left(M,V\right)
\end{split}
\end{equation}
where
\begin{align}
\gamma& =\gamma\left(z,v,\theta,\boldsymbol{\beta}\right)\\
\overline{\gamma}& =\gamma\left(zx,v+w,\theta,\boldsymbol{\beta}\right)
\end{align}

\subsection{Results for a Special Case}

We will derive analytically tractable results under a special case according
to the following assumptions

\begin{enumerate}
\item Observation $\Delta_k=\tau_k-\tau_{k-1}\in\left[\text{Exponential}\left(\mu\right)\right]$, so $L\left(u\right)=\frac{\mu}{\mu+u}$

\item Nodes lost per strike $n_1\in\left[\text{Geometric}\left(a\right)\right]$ (and $b=1-a$), $g\left(z\right)=\frac{az}{1-bz}$

\item Weight lost per node lost $w_{11}\in\left[\text{Exponential}\left(\xi\right)\right]$, so $l\left(u\right)=\frac{\xi}{\xi+u}$

\item Zero initial state ($\gamma_0\equiv 1$)
\end{enumerate}

We will find explicitly under these conditions the joint functional (2.17), i.e. the joint functional of each component at the observed passage time only.

\begin{theorem}
Under assumptions 1-4,
\begin{equation}
\begin{split}
\varPhi\left(1,z,0,v,0,\theta,\boldsymbol{0},\boldsymbol{\beta}\right)& = 1-\left(1-\gamma\right)\\
& \times\left(1+\frac{b\mu}{\lambda+b\theta}+\frac{a\lambda\mu}{\left(\lambda+b\theta\right)\left(\lambda+\theta\right)}\phi\left(z,v,\theta,\boldsymbol{\beta}\right)\right)
\end{split}
\end{equation}
where
\begin{align}
\begin{split}
\phi\left(z,v,\theta,\boldsymbol{\beta}\right)& =\frac{\xi+v}{v+\xi\left(1-d\right)}-\frac{\left(d\xi\right)^M\left(1-e^{-\left(v+\xi\right)V}\sum_{j=0}^{M-2}\frac{\left[\left(\xi+v\right)V\right]^j}{j!}\right)}{\left(v+\xi\left(1-d\right)\right)\left(\xi+v\right)^{M-1}}\\
& \quad\quad -\frac{d\xi e^{-\left(\xi+v\right)V}\sum_{j=0}^{M-2}\frac{\left(d\xi V\right)^j}{j!}}{\xi-d\xi+v}
\end{split}\\
d& =zm\left(\boldsymbol{\beta}\right)\frac{\lambda+b\theta}{\lambda+\theta}
\end{align}
\end{theorem}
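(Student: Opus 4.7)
The plan is to start from equation (3.9), $\varPhi = 1 - (1-\gamma)\mathcal{D}_{xw}^{-1}[1/(1-\overline{\gamma})](M,V)$, and carry out three steps under assumptions 1--4: (i) by routine algebra decompose $1/(1-\overline{\gamma})$ into two constants in $(x,w)$ plus a single nontrivial rational piece of the form $(\xi+v+w)/(\xi(1-dx)+v+w)$; (ii) apply $\mathcal{D}_{xw}^{-1} = \mathcal{LC}_w^{-1}\circ\mathcal{D}_x^{M-1}$ termwise; (iii) resum the resulting double series to identify $\phi$.

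For step (i), substituting $L(u)=\mu/(\mu+u)$, $g(z)=az/(1-bz)$, $l(v)=\xi/(\xi+v)$ into (3.3) and using $a+b=1$ gives $\gamma = \mu(1-bu)/(B-Au)$, where $u = zm(\boldsymbol{\beta})\xi/(\xi+v)$, $A = b(\mu+\theta)+\lambda$, and $B = \mu+\theta+\lambda$. Setting $C=b\theta+\lambda$ and $D=\theta+\lambda$, one checks $B-\mu=D$, $A-\mu b = C$, and the key algebraic identity $BC-AD = a\mu\lambda$. Long division yields $1/(1-\gamma) = A/C + (BC-AD)/[C(D-Cu)] = 1 + b\mu/C + a\mu\lambda/[C(D-Cu)]$. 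Replacing $u$ by $\overline{u} = zx\,m(\boldsymbol{\beta})\xi/(\xi+v+w)$ and pulling $D$ out of $D-C\overline{u}$ produces the factor $(\xi+v+w)/(\xi(1-dx)+v+w)$ with $d$ precisely as in (3.14). Thus
\[
\frac{1}{1-\overline{\gamma}} = 1 + \frac{b\mu}{b\theta+\lambda} + \frac{a\lambda\mu}{(b\theta+\lambda)(\theta+\lambda)}\cdot\frac{\xi+v+w}{\xi(1-dx)+v+w}.
\]

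For step (ii), constants in $(x,w)$ are fixed points of $\mathcal{D}_{xw}^{-1}$ (since $\mathcal{D}_x^{q-1}(c)=c$ and $\mathcal{LC}_w^{-1}(c)=c$), so the first two summands pass through to the first two summands of the bracket in the theorem. For the nontrivial piece I expand in $x$ as a geometric series,
\[
\frac{\xi+v+w}{\xi(1-dx)+v+w} = \sum_{k=0}^{\infty}\frac{(d\xi)^k x^k}{(\xi+v+w)^k},
\]
whose $\mathcal{D}_x^{M-1}$-image is the partial sum $\sum_{k=0}^{M-1}(d\xi)^k(\xi+v+w)^{-k}$. Each term with $k\ge 1$ is then inverted in $w$ using $\mathcal{LC}_w^{-1}[(c+w)^{-k}](V) = c^{-k}\bigl[1-e^{-cV}\sum_{j=0}^{k-1}(cV)^j/j!\bigr]$ with $c=\xi+v$, which follows from $\mathcal{L}^{-1}[(c+w)^{-k}](t) = t^{k-1}e^{-ct}/(k-1)!$ combined with division by $w$ (integration in $t$) and the incomplete-gamma identity.

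The main obstacle is step (iii): reassembling these per-term inversions produces a double sum $\sum_{k=1}^{M-1}(d\xi/(\xi+v))^k\sum_{j=0}^{k-1}((\xi+v)V)^j/j!$ that must be reorganized. Swapping the order of summation so $j$ is outer (from $0$ to $M-2$) and $k$ is inner (from $j+1$ to $M-1$) lets the inner geometric sum in $k$ collapse to a closed form. The non-exponential contribution resums to $\sum_{k=0}^{M-1}(d\xi/(\xi+v))^k = [(\xi+v)^M-(d\xi)^M]/[(\xi+v)^{M-1}(\xi(1-d)+v)]$, which splits as the first summand $(\xi+v)/(v+\xi(1-d))$ of $\phi$ minus an explicit $(d\xi)^M$ term; that $(d\xi)^M$ term then recombines with the corresponding exponential $(d\xi)^M$ contribution from the double sum to produce the factor $1-e^{-(v+\xi)V}\sum_{j=0}^{M-2}[(\xi+v)V]^j/j!$ of the second summand of $\phi$, while the remaining exponential piece collapses to the third summand $-d\xi e^{-(\xi+v)V}\sum_{j=0}^{M-2}(d\xi V)^j/j!\big/(\xi-d\xi+v)$. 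Careful signed bookkeeping of the $(d\xi)^M$ terms is the primary source of potential error; once carried out, multiplying through by the prefactor $a\lambda\mu/[(b\theta+\lambda)(\theta+\lambda)]$ and recombining with the two constants yields exactly the formula in the statement.
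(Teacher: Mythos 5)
Your proposal is correct and follows essentially the same route as the paper: the same algebraic decomposition of $\frac{1}{1-\overline{\gamma}}$ into $1+\frac{b\mu}{\lambda+b\theta}+\frac{a\lambda\mu}{\left(\lambda+b\theta\right)\left(\lambda+\theta\right)}\cdot\frac{1}{1-Cx}$ with $C=\frac{d\xi}{\xi+v+w}$, followed by $\mathcal{D}_x^{M-1}$ and then $\mathcal{L}\mathcal{C}_w^{-1}$. The only divergence is in executing the final inversion: the paper performs a partial-fraction decomposition of the closed-form ratio $\frac{1}{w}\frac{1-d^Ml\left(v+w\right)^M}{1-dl\left(v+w\right)}$ and inverts four standard pieces, whereas you invert the geometric partial sum termwise and resum by swapping the order of the resulting double sum; both computations yield the same $\phi$.
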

\begin{proof}
Using assumption 1, we have
\begin{equation*}
\gamma=L\left[\theta+\lambda-\lambda
g\left(zl\left(v\right)m\left(\boldsymbol{\beta}\right)\right)\right]=L\left[%
\theta^\ast\right]=\frac{\mu}{\mu+\theta^\ast}
\end{equation*}
where we set
\begin{align*}
\theta^\ast& =\theta+\lambda-\lambda g\left(zl\left(v\right)m\left(\boldsymbol{\beta}\right)\right)=\theta+\lambda-\lambda\frac{azl\left(v\right)m\left(\boldsymbol{\beta}\right)}{1-bzl\left(v\right)m\left(\boldsymbol{\beta}\right)}\\
\overline{\theta^\ast}& =\theta+\lambda-\lambda g\left(xzl\left(v+w\right)m\left(\boldsymbol{\beta}\right)\right)=\theta+\lambda-\lambda\frac{axzl\left(v+w\right)m\left(\boldsymbol{\beta}\right)}{1-bxzl\left(v+w\right)m\left(\boldsymbol{\beta}\right)}
\end{align*}
Combining this with (3.10),
\begin{equation}
\begin{split}
\varPhi &= 1-\left(1-\gamma\right)\mathcal{D}_{xw}^{-1}\left\{\frac{1}{1-\frac{\mu}{\mu+\overline{\theta^\ast}}}\right\}\left(M,V\right)\\
& = 1-\left(1-\gamma\right)\left(1+\mu\mathcal{D}_{xw}^{-1}\left\{\frac{1}{\overline{\theta^\ast}}\right\}\left(M,V\right)\right)
\end{split}
\end{equation}
We will explicitly calculate $\mathcal{D}_{xw}^{-1}\left\{\frac{1}{\overline{\theta^\ast}}\right\}\left(M,V\right)=\mathcal{L}\mathcal{C}_w^{-1}\circ\mathcal{D}_x^{M-1}\left\{\frac{1}{\overline{\theta^\ast}}\right\}\left(V\right)$, so first we manipulate $\frac{1}{\overline{\theta^\ast}}$ into a form for which applying $\mathcal{D}_x^{M-1}$ is possible. Firstly,
\begin{equation*}
\frac{1}{\overline{\theta^\ast}}=\frac{1}{\theta+\lambda-\lambda\frac{axzl\left(v+w\right)m\left(\boldsymbol{\beta}\right)}{1-bxzl\left(v+w\right)m\left(\boldsymbol{\beta}\right)}}=\frac{b}{\lambda+b\theta}+\frac{a\lambda}{\left(\lambda+b\theta\right)\left(\theta+\lambda\right)}\frac{1}{1-Cx}
\end{equation*}
where
\begin{equation*}
C=zl\left(v+w\right)m\left(\boldsymbol{\beta}\right)\frac{\lambda+b%
\theta}{\lambda+\theta}=c\frac{\lambda+b\theta}{\lambda+\theta}
\end{equation*}
which is constant with respect to $x$, allowing us to find
\begin{equation*}
\mathcal{D}_x^{M-1}\left\{\frac{1}{\overline{\theta^\ast}}\right\}=\frac{b}{\lambda+b\theta}+\frac{a\lambda}{\left(\lambda+b\theta\right)\left(\lambda+\theta\right)}\frac{1-C^M}{1-C}
\end{equation*}
Denote
\begin{equation}
d=\frac{C}{l\left(v+w\right)}=zm\left(\boldsymbol{\beta}\right)\frac{\lambda+b\theta}{\lambda+\theta}
\end{equation}
and calculate the inverse Laplace-Carson transform $\mathcal{L}\mathcal{C}_w^{-1}$:
\begin{align}
\begin{split}
\mathcal{L}\mathcal{C}_w^{-1}\left\{\mathcal{D}_x^{M-1}\left\{\frac{1}{\overline{\theta^\ast}}\right\}\right\}\left(V\right)& =\frac{b}{\lambda+b\theta}+\frac{a\lambda}{\left(\lambda+b\theta\right)\left(\lambda+\theta\right)}\\
& \quad \times\mathcal{L}_w^{-1}\left\{\frac{1}{w}\frac{1-d^Ml\left(v+w\right)^M}{1-dl\left(v+w\right)}\right\}\left(V\right)
\end{split}
\end{align}
After applying assumption 4 for $l$, we manipulate the expression into an appropriate form and carry out the inversion
\begin{align*}
\frac{1}{w}\frac{1-d^Ml\left(v+w\right)^M}{1-dl\left(v+w\right)}& =\frac{\xi+v}{w\left(\xi+v+w-d\xi\right)}+\frac{1}{\xi+v+w-d\xi}\\
& \quad -\frac{\left(d\xi\right)^M}{\xi-d\xi+v}\frac{1}{\left(\xi+v+w\right)^{M-1}}\left[\frac{1}{w}-\frac{1}{\xi+v+w-d\xi}\right]
\end{align*}
We are left with
\begin{align}
\begin{split}
\phi\left(z,v,\theta,\boldsymbol{\beta}\right)& =\mathcal{L}_w^{-1}\biggl\{\frac{\xi+v}{w\left(w+v+\xi\left(1-d\right)\right)}+\frac{1}{w+v+\xi\left(1-d\right)}\\
& \quad\quad\quad -\frac{\left(d\xi\right)^M}{v+\xi\left(1-d\right)}\biggl(\frac{1}{w}\frac{1}{\left(w+v+\xi\right)^{M-1}}\\
& \quad\quad\quad -\frac{1}{w+v+\xi\left(1-d\right)}\frac{1}{\left(\xi+v+w\right)^{M-1}}\biggr)\biggr\}\left(V\right)\\
& =\frac{\xi+v}{v+\xi\left(1-d\right)}-\frac{\left(d\xi\right)^M\left(1-e^{-\left(v+\xi\right)V}\sum_{j=0}^{M-2}\frac{\left[\left(\xi+v\right)V\right]^j}{j!}\right)}{\left(v+\xi\left(1-d\right)\right)\left(\xi+v\right)^{M-1}}\\
& \quad\quad -\frac{d\xi e^{-\left(\xi+v\right)V}}{v+\xi\left(1-d\right)}\sum_{j=0}^{M-2}\frac{\left(d\xi V\right)^j}{j!}
\end{split}
\end{align}
Compiling equations (3.16-3.19), we have the desired
result.
\end{proof}

\subsection{Marginal Transforms upon $\tau_\rho$}

In this section, we will find the marginal transforms of the active components and time upon the first observed passage time. First, we find the marginal PGF of the cumulative node loss component at the first observed passage time, $N_\rho$.

\begin{cor}
Under assumptions 1-4,
\begin{equation}
E\left[z^{N_\rho}\right]=\varPhi\left(1,z,0,0,0,0,\boldsymbol{0},\boldsymbol{0}\right)=\frac{a\mu\left[1-\phi^\ast\left(z,0,0,\boldsymbol{0}\right)\right]}{\lambda+\mu-\left(\lambda+b\mu\right)z}
\end{equation}
where
\begin{equation}
\phi^\ast\left(z,0,0,\boldsymbol{0}\right)=1-z^M\left(1-e^{-\xi V}\sum_{j=0}^{M-2}\frac{\left(\xi V\right)^j}{j!}\right)-e^{-\xi V}\sum_{j=0}^{M-2}\frac{\left(\xi V\right)^j}{j!}z^{j+1}
\end{equation}
\end{cor}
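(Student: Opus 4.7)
The plan is to specialize Theorem 3.1 to the case $v=0$, $\theta=0$, $\boldsymbol{\beta}=\boldsymbol{0}$, since evaluating $\varPhi(1,z,0,v,0,\theta,\boldsymbol{0},\boldsymbol{\beta})$ at these values extracts exactly the marginal PGF $E[z^{N_\rho}]$. The argument is essentially an algebraic collapse; the only delicate point is recognizing that the combination of geometric and exponential ingredients yields precisely the polynomial $\phi^*$ stated in the corollary.

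First I would evaluate $\gamma = \gamma(z,0,0,\boldsymbol{0})$. Using $l(0)=m(\boldsymbol{0})=1$ together with the geometric PGF $g(z)=az/(1-bz)$, the identity $a+b=1$ gives $1-g(z) = (1-z)/(1-bz)$, and hence $\gamma = \mu(1-bz)/[\mu(1-bz)+\lambda(1-z)]$. A short rearrangement produces
\begin{equation*}
1-\gamma \;=\; \frac{\lambda(1-z)}{(\lambda+\mu)-(\lambda+b\mu)z},
\end{equation*}
which already carries the denominator appearing in the target expression.

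Next I would evaluate the outer bracket of Theorem 3.1 at $\theta=0$, where it reduces to $1 + b\mu/\lambda + (a\mu/\lambda)\phi$. For $\phi$ itself I would substitute $v=0$ into the formula (3.13), noting that $d$ collapses to $d = zm(\boldsymbol{0}) = z$ and that the denominators $\xi(1-d)$ and $\xi^{M-1}$ combine to cancel the $\xi^M$ factor hidden in $(d\xi)^M$. After these cancellations the three terms of $\phi$ share a common factor $1/(1-z)$, and what remains in square brackets is exactly the polynomial defining $\phi^*(z,0,0,\boldsymbol{0})$; in particular $d\sum_{j=0}^{M-2}(d\xi V)^j/j! = \sum_{j=0}^{M-2} z^{j+1}(\xi V)^j/j!$ reproduces the last summation in the stated formula.

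The final step is purely mechanical: substituting $\phi = \phi^*/(1-z)$ back into $(1-\gamma)\bigl(1+b\mu/\lambda+(a\mu/\lambda)\phi\bigr)$ causes the factor $(1-z)$ in $1-\gamma$ to cancel against $1/(1-z)$ coming from $\phi$, and subtracting the result from $1$, the constant contribution simplifies via $\mu-b\mu=a\mu$ to deliver $a\mu(1-\phi^*)$ in the numerator over the denominator identified in the first step. The main obstacle is bookkeeping --- tracking the three additive contributions to $\phi$ through the simplification and recognizing their sum as the polynomial $\phi^*$ advertised by the corollary; once the factor $1/(1-z)$ has been extracted, the remaining manipulation is routine common-denominator arithmetic.
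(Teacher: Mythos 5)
Your proposal is correct and follows essentially the same route as the paper's own proof: specialize Theorem 3.1 at $v=\theta=0$, $\boldsymbol{\beta}=\boldsymbol{0}$, compute $1-\gamma=\lambda(1-z)/[\lambda+\mu-(\lambda+b\mu)z]$, observe $d=z$ so that $\phi(z,0,0,\boldsymbol{0})=\phi^\ast/(1-z)$, and cancel the factor $(1-z)$ before collecting terms via $\mu-b\mu=a\mu$. No gaps; the bookkeeping you describe is exactly what the paper carries out.
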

\noindent (i.e. $\phi^\ast\left(z,0,0,\boldsymbol{0}\right)=\left(1-z\right)\phi\left(z,0,0,\boldsymbol{0}\right)$)
\begin{proof}
We have $d=z$ and
\begin{equation*}
\theta^\ast=\lambda-\lambda
g\left(z\right)=\lambda\left(\frac{1-z}{1-bz}\right)
\end{equation*}
Therefore,
\begin{equation*}
1-\gamma=\frac{\theta^\ast}{\mu+\theta^\ast}=\lambda\left(\frac{1-z}{\mu\left(1-bz\right)+\lambda\left(1-z\right)}\right)
\end{equation*}
and
\begin{align*}
1+\frac{b\mu}{\lambda+b\theta}+\frac{a\lambda\mu}{\left(\lambda+b\theta\right)\left(\lambda+\theta\right)}\phi\left(z,v,\theta,\boldsymbol{\beta}\right)\Big|_{v=\theta=0,\boldsymbol{\beta}=\boldsymbol{0}}=\frac{\lambda+b\mu+a\mu\phi\left(z,0,0,\boldsymbol{0}\right)}{\lambda}
\end{align*}
and
\begin{align*}
\phi\left(z,0,0,\boldsymbol{0}\right)& =\frac{1}{1-z}-\frac{z^M\left(1-e^{-\xi V}\sum_{j=0}^{M-2}\frac{\left(\xi V\right)^j}{j!}\right)}{1-z}-\frac{ze^{-\xi V}\sum_{j=0}^{M-2}\frac{\left(\xi V\right)^j}{j!}z^j}{1-z}
\end{align*}
Altogether, we have
\begin{align*}
E\left[z^{N_\rho}\right]& =1-\left(\frac{\,1-z}{\mu\left(1-bz\right)+\lambda\left(1-z\right)}\right)\left(\lambda+b\mu+a\mu\phi\left(z,0,0,\boldsymbol{0}\right)\right)\\
& =\frac{a\mu\left(1-\phi^\ast\left(z,0,0,\boldsymbol{0}\right)\right)}{\lambda+\mu-\left(\lambda+b\mu\right)z}
\end{align*}
\end{proof}
Next, we find the marginal LST of the cumulative weight lost upon the first observed passage time, $W_\rho$.
\begin{cor}Under assumptions 1-4,
\begin{align}
\begin{split}
E\left[e^{-vW_\rho}\right]&=\varPhi\left(1,1,0,v,0,0,\boldsymbol{0},\boldsymbol{0}%
\right)\\
&=\frac{Ke^{-vV}}{1+kv}-\sum_{j=0}^{M-2}\frac{V^j}{j!}\frac{\xi^{M-1}e^{-(v+\xi)V}}{\left(1+kv\right)\left(v+\xi\right)^{M-1-j}}+\frac{\xi^{M-1}}{_{\left(1+kv\right)\left(v+\xi\right)^{M-1}}}
\end{split}
\end{align}
where
\begin{equation}
K=e^{-\xi V}\sum_{j=0}^{M-2}\frac{\left(\xi V\right)^j}{j!} \text{ and } k=\frac{\lambda+\mu}{a\mu\xi}
\end{equation}
\end{cor}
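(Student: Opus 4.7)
The plan is to read off $E\!\left[e^{-vW_\rho}\right]$ by specializing the joint functional of Theorem 3.1 to the arguments $z=1$, $\theta=0$, $\boldsymbol{\beta}=\boldsymbol{0}$. Two ingredients need to be evaluated at those arguments: the marginal transform factor $1-\gamma$, and the function $\phi$ of equation (3.14). The bracket in (3.13) collapses at $\theta=0$ to $\bigl(\lambda+b\mu+a\mu\,\phi(1,v,0,\boldsymbol{0})\bigr)/\lambda$, and the resulting $1/\lambda$ will cancel a $\lambda$ coming from $1-\gamma$.

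First I would compute $1-\gamma$. From $\gamma=\mu/(\mu+\theta^{\ast})$ combined with $g(z)=az/(1-bz)$ and $l(v)=\xi/(\xi+v)$, I get $\theta^{\ast}\big|_{z=1,\theta=0,\boldsymbol{\beta}=\boldsymbol{0}}=\lambda v/(a\xi+v)$, and therefore $1-\gamma=\lambda v/\bigl[a\mu\xi+(\lambda+\mu)v\bigr]$. The denominator rewrites as $a\mu\xi(1+kv)$ with $k=(\lambda+\mu)/(a\mu\xi)$, which is exactly how the constant $k$ of the statement enters.

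The substantive step is evaluating $\phi(1,v,0,\boldsymbol{0})$. The shorthand $d=zm(\boldsymbol{\beta})(\lambda+b\theta)/(\lambda+\theta)$ reduces to $d=1$ at these arguments, so in (3.14) each of the denominators $v+\xi(1-d)$ and $\xi-d\xi+v$ collapses to $v$. The apparent singularities are benign because the overall factor $v$ coming from $1-\gamma$ will cancel them. I would therefore compute $v\,\phi(1,v,0,\boldsymbol{0})$ rather than $\phi$ itself, using the identity $[(\xi+v)V]^j/(\xi+v)^{M-1}=V^j/(\xi+v)^{M-1-j}$ to recognize the sum $\sum_{j=0}^{M-2}V^j/[j!(v+\xi)^{M-1-j}]$ that appears in the statement, and factoring $e^{-(v+\xi)V}=e^{-vV}e^{-\xi V}$ so that $e^{-\xi V}\sum_{j=0}^{M-2}(\xi V)^j/j!$ can be identified with the constant $K$.

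Finally I would combine $1-\gamma$ with the bracket. The constant part $(\lambda+b\mu)/\lambda$ of the bracket times $-(1-\gamma)$ contributes a term of the form $-v(\lambda+b\mu)/[a\mu\xi(1+kv)]$, which together with the leading $1$ leaves behind the non-exponential residual $\xi^{M-1}/\bigl[(1+kv)(v+\xi)^{M-1}\bigr]$ after telescoping against the $\xi+v$ numerator in the first term of $v\,\phi$. The contribution of the last term of $\phi$, namely $-d\xi e^{-(\xi+v)V}\sum(\cdot)/v$ at $d=1$, produces the $Ke^{-vV}/(1+kv)$ term once the factor $v$ cancels. The middle term of $\phi$ produces the negative finite sum over $j$. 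The main obstacle is purely bookkeeping: tracking signs, constants, and the telescoping of $\xi^M$ against $(v+\xi)^{M-1}$ into $\xi^{M-1}$ as the three terms recombine. Once the singular $1/v$ cancellations are handled explicitly, the remaining verification is a routine algebraic exercise.
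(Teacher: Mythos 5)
Your proposal is correct and follows essentially the same route as the paper's proof: specialize Theorem 3.1 at $z=1$, $\theta=0$, $\boldsymbol{\beta}=\boldsymbol{0}$, compute $\theta^\ast=\lambda v/(v+a\xi)$ and hence $1-\gamma=\lambda v/[a\mu\xi+(\lambda+\mu)v]$, note $d=1$, and absorb the $1/v$ singularities of $\phi(1,v,0,\boldsymbol{0})$ by working with $v\,\phi$ before recombining into the form $\frac{1}{\xi}\,\frac{\xi+v-v\phi(1,v,0,\boldsymbol{0})}{1+kv}$. The paper carries out the same cancellation (its intermediate line $\frac{a\mu\xi+a\mu v-a\mu v\phi(1,v,0,\boldsymbol{0})}{a\mu\xi+(\lambda+\mu)v}$ is exactly your $v\,\phi$ device), so the only difference is that you defer the final bookkeeping that the paper writes out explicitly.
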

\begin{proof}
We have
\begin{equation*}
\theta^\ast=\lambda-\lambda g\left(l\left(v\right)\right)=\lambda\left(1-\frac{al\left(v\right)}{1-bl\left(v\right)}\right)=\frac{\lambda v}{v+a\xi}
\end{equation*}
\begin{align*}
\left(1-\gamma\right)\left(1+\frac{b\mu}{\lambda+b\theta}+\frac{a\lambda\mu}{\left(\lambda+b\theta\right)\left(\lambda+\theta\right)}\phi\left(z,v,\theta,\boldsymbol{\beta}\right)\right)\Big|_{z=1,\theta=0,\boldsymbol{\beta}=\boldsymbol{0}}\\
\quad =\frac{\lambda v+b\mu v+a\mu v\phi\left(1,v,0,\boldsymbol{0}\right)}{a\xi\mu+\left(\lambda+\mu\right)v}
\end{align*}
Since $d=zm\left(\boldsymbol{\beta}\right)\frac{\lambda+b\theta}{\lambda+\theta}=1$,
\begin{align*}
\begin{split}
\phi\left(1,v,0,\boldsymbol{0}\right)& =\frac{\xi+v}{v}-\frac{\xi^M\left(1-e^{-\left(v+\xi\right)V}\sum_{j=0}^{M-2}\frac{\left[\left(\xi+v\right)V\right]^j}{j!}\right)}{v\left(\xi+v\right)^{M-1}}-\frac{\xi e^{-\left(\xi+v\right)V}\sum_{j=0}^{M-2}\frac{\left(\xi V\right)^j}{j!}}{v}
\end{split}
\end{align*}
\begin{align*}
\begin{split}
E\left[e^{-vW_\rho}\right]& =1-\frac{\lambda v+b\mu v+a\mu v\phi\left(1,v,0,\boldsymbol{0}\right)}{a\xi\mu+\left(\lambda+\mu\right)v}=\frac{a\mu\xi+a\mu v-a\mu v\phi\left(1,v,0,\boldsymbol{0}\right)}{a\mu\xi+\left(\lambda+\mu\right)v}\\
& =\frac{1}{\xi}\frac{\xi+v-v\phi\left(1,v,0,\boldsymbol{0}\right)}{1+kv}\\
& = \frac{Ke^{-vV}}{1+kv}-\sum_{j=0}^{M-2}\frac{V^j}{j!}\frac{\xi^{M-1}e^{-(v+\xi)V}}{\left(1+kv\right)\left(v+\xi\right)^{M-1-j}}+\frac{\xi^{M-1}}{_{\left(1+kv\right)\left(v+\xi\right)^{M-1}}}
\end{split}
\end{align*}
\end{proof}
Lastly, we find the marginal Laplace-Stieltjes transform of the first observed passage time $\tau_\rho$, $\varPhi\left(1,1,0,0,0,\theta,\boldsymbol{0},\boldsymbol{0}\right)=E\left[e^{-\theta\tau_\rho}\right]$, which follows trivially from Theorem 3.1.
\begin{cor}
Under assumptions 1-4,
\begin{align}
\begin{split}
E\left[e^{-\theta\tau_\rho}\right]&=\varPhi\left(1,1,0,0,0,\theta,\boldsymbol{0},\boldsymbol{0}\right)\\
&=1-\frac{\theta}{\mu+\theta}\left[1+\frac{b\mu}{\lambda+b\theta}+\frac{a\lambda\mu}{\left(\lambda+b\theta\right)\left(\lambda+\theta\right)}\phi\left(1,0,\theta,\boldsymbol{0}\right)\right]
\end{split}
\end{align}
where
\begin{align}
\begin{split}
\phi\left(1,0,\theta,\boldsymbol{0}\right)& =\frac{1}{1-d}-\frac{d^M\left(1-e^{-\xi V\,\,}\sum_{j=0}^{M-2}\frac{\left(\xi V\right)^j}{j!}\right)}{1-d}\\
& \quad -\frac{de^{-\xi V}\sum_{j=0}^{M-2}\frac{\left(d\xi V\right)^j}{j!}}{1-d}
\end{split}\\
d=\frac{\lambda+b\theta}{\lambda+\theta}
\end{align}
\end{cor}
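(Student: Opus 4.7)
The plan is to simply specialize Theorem 3.1 at the point $(y,z,u,v,\eta,\boldsymbol{\alpha},\boldsymbol{\beta})=(1,1,0,0,0,\boldsymbol{0},\boldsymbol{0})$, retaining only the variable $\theta$. Since Theorem 3.1 already delivers the full joint functional $\varPhi(1,z,0,v,0,\theta,\boldsymbol{0},\boldsymbol{\beta})$ in closed form, there is no new inversion to perform: the entire task is substitution and simplification.

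First I would compute $\gamma$ at the specified point. Using $g(1)=1$ (geometric PGF evaluated at $1$), $l(0)=1$, and $m(\boldsymbol{0})=1$, the shifted argument $\theta^{\ast}=\theta+\lambda-\lambda g(l(0)m(\boldsymbol{0}))$ collapses to $\theta^{\ast}=\theta$, so $\gamma=L(\theta)=\mu/(\mu+\theta)$ and therefore $1-\gamma=\theta/(\mu+\theta)$. This yields the overall prefactor that appears in front of the bracket in (3.24). Next, from the definition $d=zm(\boldsymbol{\beta})(\lambda+b\theta)/(\lambda+\theta)$ in (3.15), setting $z=1$ and $\boldsymbol{\beta}=\boldsymbol{0}$ immediately gives $d=(\lambda+b\theta)/(\lambda+\theta)$, matching (3.26).

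Then I would specialize the auxiliary function $\phi$ from (3.14) at $v=0$. Each of the three summands in (3.14) carries a factor $v+\xi(1-d)$ or $\xi-d\xi+v$ in the denominator, which reduces to $\xi(1-d)$ when $v=0$; meanwhile the numerator factors of $\xi+v$ reduce to $\xi$, and the powers $(\xi+v)^{M-1}$ to $\xi^{M-1}$. After cancelling one power of $\xi$ against the denominator in each of the three terms, the expression collapses exactly to the three-term formula (3.25) for $\phi(1,0,\theta,\boldsymbol{0})$. The third summand in particular becomes $de^{-\xi V}\sum_{j=0}^{M-2}(d\xi V)^j/j!\,/(1-d)$, matching the stated form.

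Finally I would plug the specialized $1-\gamma$, the bracketed combination $1+b\mu/(\lambda+b\theta)+a\lambda\mu\,\phi(1,0,\theta,\boldsymbol{0})/((\lambda+b\theta)(\lambda+\theta))$, and the simplified $\phi(1,0,\theta,\boldsymbol{0})$ back into Theorem 3.1 to obtain (3.24). Since no step requires fresh inversion of $\mathcal{L}\mathcal{C}_w^{-1}$ or $\mathcal{D}_x^{M-1}$, the only thing that needs care is tracking the $\xi$-cancellations at $v=0$; there is no substantive obstacle beyond bookkeeping.
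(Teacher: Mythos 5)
Your proposal is correct and matches the paper's approach exactly: the paper gives no separate proof of this corollary, stating only that it ``follows trivially from Theorem 3.1,'' and your substitution $z=1$, $v=0$, $\boldsymbol{\beta}=\boldsymbol{0}$ with the resulting simplifications $\gamma=\mu/(\mu+\theta)$, $d=(\lambda+b\theta)/(\lambda+\theta)$, and the $\xi$-cancellations in $\phi$ is precisely the omitted bookkeeping. All three specialized terms of $\phi(1,0,\theta,\boldsymbol{0})$ check out against the stated formula, so nothing is missing.
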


\subsection{Additional Probabilistic Results}

In this section, we provide a sampling of the explicit results that can be found via the marginal transforms from the previous section and demonstrate how well they match their respective random variables found via simulations of the process (as explained in Appendix B).
\begin{theorem}
Under assumptions 1-4,
\begin{equation}
E\left[N_\rho\right]=\frac{\lambda+b\mu}{a\mu}+M-\left(M-1\right)e^{-\xi V}\sum_{j=0}^{M-2}\frac{\left(\xi V\right)^j}{j!}+e^{-\xi V}\sum_{j=0}^{M-2}\frac{\left(\xi V\right)^j}{\left(j-1\right)!}
\end{equation}
\end{theorem}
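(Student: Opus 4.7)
The plan is to obtain $E[N_\rho]$ by differentiating the marginal PGF $E[z^{N_\rho}]$ from Corollary 3.2 and evaluating at $z=1$. Writing
\[
E[z^{N_\rho}] = \frac{N(z)}{D(z)}, \qquad N(z)=a\mu\bigl(1-\phi^{\ast}(z,0,0,\boldsymbol{0})\bigr), \qquad D(z)=\lambda+\mu-(\lambda+b\mu)z,
\]
the ratio is of the $0/0$ form at $z=1$, so I first verify the consistency check $\phi^{\ast}(1,0,0,\boldsymbol{0})=0$ (which is immediate from the explicit formula in Corollary 3.2 since the three constant pieces telescope). This gives $N(1)=D(1)=a\mu$, and $E[1^{N_\rho}]=1$ as expected.

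Next, by the quotient rule and the identity $N(1)=D(1)$,
\[
E[N_\rho]=\frac{d}{dz}\frac{N(z)}{D(z)}\bigg|_{z=1}=\frac{N'(1)}{D(1)}-\frac{D'(1)}{D(1)}.
\]
Since $D'(z)=-(\lambda+b\mu)$, the second term produces exactly $\dfrac{\lambda+b\mu}{a\mu}$, which is the first summand of the claimed expression for $E[N_\rho]$.

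It remains to compute $N'(1)/D(1)=-\phi^{\ast\,\prime}(1,0,0,\boldsymbol{0})$. Setting $A=1-e^{-\xi V}\sum_{j=0}^{M-2}\frac{(\xi V)^j}{j!}$ and $B_j=e^{-\xi V}\frac{(\xi V)^j}{j!}$, I differentiate the polynomial expression from Corollary 3.2 term by term:
\[
\phi^{\ast\,\prime}(z,0,0,\boldsymbol{0})=-Mz^{M-1}A-\sum_{j=0}^{M-2}(j+1)B_jz^{j}.
\]
Evaluating at $z=1$ gives $-\phi^{\ast\,\prime}(1,0,0,\boldsymbol{0})=MA+\sum_{j=0}^{M-2}(j+1)B_j$.

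The main (and only) nontrivial step is the index manipulation needed to match the form in the theorem. I will split $(j+1)B_j=B_j+jB_j$ and use the convention $\frac{j}{j!}=\frac{1}{(j-1)!}$ (with the $j=0$ contribution vanishing). Substituting back,
\[
-\phi^{\ast\,\prime}(1,0,0,\boldsymbol{0})=M-(M-1)\,e^{-\xi V}\!\sum_{j=0}^{M-2}\frac{(\xi V)^j}{j!}+e^{-\xi V}\!\sum_{j=0}^{M-2}\frac{(\xi V)^j}{(j-1)!},
\]
and adding $\frac{\lambda+b\mu}{a\mu}$ from the $D'(1)/D(1)$ contribution yields the stated identity. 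No genuinely hard step is involved; the only place to be careful is ensuring the $j=0$ boundary term is handled correctly under the $1/(j-1)!$ convention so that the two finite sums align with those written in the theorem.
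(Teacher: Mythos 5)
Your proposal is correct and takes essentially the same route as the paper: both differentiate the PGF of Corollary 3.2 at $z=1$, yours via the quotient rule together with the identity $N(1)=D(1)=a\mu$, the paper's via the substitution $s=\frac{\lambda+\mu}{\lambda+b\mu}$ and direct differentiation of the product form. One trivial slip worth fixing: the ratio is not of $0/0$ form at $z=1$ (it evaluates to $a\mu/a\mu=1$), but this misstatement does not affect the argument, which correctly exploits $N(1)=D(1)$ and handles the $\frac{1}{(j-1)!}$ boundary convention exactly as the paper does.
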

\begin{proof}
Using $E[z^{N_\rho}]$ of Corollary 3.2, we can find the mean\\
$E\left[N_\rho\right]=\lim_{z\rightarrow 1-}\frac{d}{dz}E\left[z^{N_\rho}\right]$. We manipulate the PGF into a more suitable form, with $s=\frac{\lambda+\mu}{\lambda+b\mu}$,
\begin{align*}
E\left[z^{N_\rho}\right]& =\frac{a\mu}{\lambda+b\mu}\frac{1}{s-z}\left[z^M\left(1-e^{-\xi V}\sum_{j=0}^{M-2}\frac{\left(\xi V\right)^j}{j!}\right)+e^{-\xi V}\sum_{j=0}^{M-2}\frac{\left(\xi V\right)^j}{j!}z^{j+1}\right]
\end{align*}
Then,
\begin{align*}
E\left[N_\rho\right]& =\lim_{z\rightarrow 1-}\frac{d}{dz}E\left[z^{N_\rho}\right]=\left(1-K\right)\frac{M\left(s-1\right)+1}{\left(s-1\right)}+e^{-\xi V}\sum_{j=0}^{M-2}\frac{\left(\xi V\right)^j}{j!}\frac{\left[j\left(s-1\right)+s\right]}{\left(s-1\right)}\\
& =\frac{\lambda+b\mu}{a\mu}+M+\left(1-M\right)K+e^{-\xi V}\sum_{j=0}^{M-2}\frac{\left(\xi V\right)^j}{\left(j-1\right)!}
\end{align*}
\end{proof}
Next, we find the mean of the cumulative weight loss component upon the first observed passing time.
\begin{theorem}
Under assumptions 1-4,
\begin{equation}
\begin{split}
E\left[W_\rho\right]& =\frac{1}{\xi}\Bigg(\frac{\lambda+b\mu}{a\mu}+M-\left(M-1\right)e^{-\xi V}\sum_{j=0}^{M-2}\frac{\left(\xi V\right)^j}{j!}+e^{-\xi V}\sum_{j=0}^{M-2}\frac{\left(\xi V\right)^j}{\left(j-1\right)!}\Bigg)\\
& =\frac{E\left[N_\rho\right]}{\xi}=E\left[w_{11}\right]E\left[N_\rho\right]
\end{split}
\end{equation}
\end{theorem}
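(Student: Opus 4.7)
My plan is to exploit the marginal Laplace--Stieltjes transform $E[e^{-vW_\rho}]$ from Corollary 3.4 and extract $E[W_\rho]$ via $E[W_\rho] = -\left.\frac{d}{dv}E[e^{-vW_\rho}]\right|_{v=0}$. The final equalities $E[W_\rho] = E[N_\rho]/\xi = E[w_{11}]E[N_\rho]$ then follow by direct comparison with Theorem 3.5, together with $E[w_{11}]=1/\xi$. Structurally, this is an instance of Wald's identity: $W_\rho$ is a sum of $N_\rho$ iid Exponential weights with rate $\xi$, and the event $\{N_\rho = n\}$ depends on the weight sequence only through its first $n$ entries and is independent of the remaining weights, which gives an independent conceptual check on the explicit calculation.

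For the computation, I would split the formula in Corollary 3.4 into three blocks, each a product of factors of the form $(1+kv)^{-1}$, $e^{-vV}$, $e^{-(v+\xi)V}$, and $(v+\xi)^{-(M-1-j)}$. Product-rule differentiation at $v=0$ then attaches the following logarithmic-derivative contributions: $k=(\lambda+\mu)/(a\mu\xi)$ from $(1+kv)^{-1}$, a factor $V$ from each exponential, and $(M-1-j)/\xi$ from each power of $v+\xi$. Because $\left.E[e^{-vW_\rho}]\right|_{v=0} = 1$, summing the $(1+kv)^{-1}$ contributions across the three blocks collapses to $k$ alone, so only the remaining derivatives need to be tracked.

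Collecting terms and simplifying using $K = e^{-\xi V}\sum_{j=0}^{M-2}(\xi V)^j/j!$, I would verify that the result equals $\xi^{-1}$ times $\frac{\lambda+b\mu}{a\mu}+M-(M-1)K+e^{-\xi V}\sum_{j=0}^{M-2}(\xi V)^j/(j-1)!$, matching Theorem 3.5. The principal obstacle will be the bookkeeping in the middle block: the derivative $\partial_v (v+\xi)^{-(M-1-j)}\big|_{v=0}$ produces $(M-1-j)\xi^{j-M}$, and combining these with the $-V$ contribution from $e^{-(v+\xi)V}$ and then reindexing $j \mapsto j-1$ is needed to recognize the factorial-shifted sum $\sum(\xi V)^j/(j-1)!$. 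Once that algebra is handled, the identification with $E[N_\rho]/\xi$, and hence with $E[w_{11}]E[N_\rho]$, is immediate.
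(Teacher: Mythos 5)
Your proposal is correct and follows essentially the same route as the paper: differentiate the marginal LST of $W_\rho$ (which is Corollary 3.3, not 3.4 --- a harmless misreference) at $v=0$ and match the result to $E[N_\rho]/\xi$ from Theorem 3.5; your logarithmic-derivative bookkeeping checks out, using $k+(M-1)/\xi=\frac{\lambda+b\mu}{a\mu\xi}+\frac{M}{\xi}$. The Wald-identity remark is a nice independent sanity check but is not part of the paper's argument.
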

\begin{proof}
Using the LST $E\left[e^{-vW_\rho}\right]$ of Corollary 3.3, we use the property $E\left[W_\rho\right]=-\frac{d}{dv}E\left[e^{-vW_\rho}\right]\Big|_{v=0}$. We manipulate the LST into a suitable form
\begin{equation*}
\begin{split}
E\left[e^{-vW_\rho}\right]& =\frac{Ke^{-vV}}{1+kv}-\sum_{j=0}^{M-2}\frac{V^j}{j!}\frac{\xi^{M-1}e^{-(v+\xi)V}}{\left(1+kv\right)\left(v+\xi\right)^{M-1-j}}+\frac{\xi^{M-1}}{_{\left(1+kv\right)\left(v+\xi\right)^{M-1}}}
\end{split}
\end{equation*}
Then, applying the derivative and limit,
\begin{equation*}
\begin{split}
E\left[W_\rho\right]&=\frac{1}{\xi}\left[\frac{\lambda+b\mu}{a\mu}+M-\left(M-1\right)K+e^{-\xi V}\sum_{j=0}^{M-2}\frac{\left(\xi V\right)^j}{\left(j-1\right)!}\right]
\end{split}
\end{equation*}
\end{proof}
Simulations of the process agree with the above formulas. For each set of parameters, we generated 1,000 realizations of the process by the method described in Appendix B, yielding the following results (with sample means and absolute errors):

\begin{center}
\begin{tabular}[t]{|l|l|l|l|l|l|l|}
\hline
$\left(\lambda,\mu,a,\xi,M,V\right)$&$E\left[N_\rho\right]$&S. Mean&Error&$E\left[W_\rho\right]$&S. Mean&Error\\
\hline
(.2, 2, .5, 1, 1000, 1000)&989.08&988.82&0.26&989.08&990.06&0.98\\
\hline
(1, 2, .5, 1, 1000, 1000)&989.88&989.01&0.87&989.88&989.30&0.58\\
\hline
(3, 2, .5, 1, 1000, 1000)&991.88&990.59&1.29&991.88&989.18&2.70\\
\hline
(1, 2, .4, 1, 1000, 1000)&990.63&990.39&0.24&990.63&990.27&0.36\\
\hline
(1, 2, .2, 1, 1000, 1000)&994.38&994.04&0.34&994.38&994.86&0.48\\
\hline
(1, 2, .5, 1, 1000, 1000)&998.88&992.70&6.18&998.88&994.99&3.89\\
\hline
(1, 1, .5, 1, 1000, 1000)&990.88&990.00&0.88&990.88&989.71&1.17\\
\hline
(1, 5, .5, 1, 1000, 1000)&989.28&989.92&0.64&989.28&988.97&0.31\\
\hline
(1, 10, .5, 1, 1000, 1000)&989.08&989.08&0.00&989.08&989.68&0.31\\
\hline
(1, 2, .5, .5, 1000, 1000)&503.00&502.73&0.27&1006.00&1005.04&0.96\\
\hline
(1, 2, .5, 1.01, 1000, 1000)&994.09&993.06&1.03&984.25&983.65&0.60\\
\hline
(1, 2, .5, 2, 1000, 1000)&1002.00&1001.57&0.43&501.00&500.91&0.09\\
\hline
(1, 2, .5, 1, 1000, 800)&803.00&802.68&0.32&803.00&802.67&0.33\\
\hline
(1, 2, .75, 1, 1000, 750)&752.00&752.10&0.10&752.00&751.68&0.32\\
\hline
(1, 2, .5, .5, 500, 1000)&493.57&493.46&0.11&987.14&986.59&0.55\\
\hline
\end{tabular}
\end{center}

The highest error occurs for $E\left[N_\rho\right]$ with parameter set 6, but the relative error even here is only 0.00619, and it is typically much smaller.

Lastly, we will derive the PDF of the first observed passage time $\tau_{\rho.}$
\begin{theorem}
Under assumptions 1-4,
\begin{equation}
\begin{split}
F_{\tau_\rho}\left(\vartheta\right)& =P\left\{\tau_\rho\leq\vartheta\right\}\\
&=\lambda\left(1-K\right)\sum_{i=0}^{M-1}c_i\phi_i\left(\vartheta\right)+\lambda e^{-\xi V}\sum_{j=0}^{M-2}\frac{\left(\xi V\right)^j}{j!}\,\,\sum_{i=0}^jd_i\phi_i\left(\vartheta\right)
\end{split}
\end{equation}
where
\begin{align}
K&=e^{-\xi V}\sum_{j=0}^{M-2}\frac{\left(\xi V\right)^j}{j!}\\
c_i&=\binom{M-1}{i}\left(a\lambda\right)^ib^{M-1-i}\\
d_i&=\binom{j}{i}\left(a\lambda\right)^ib^{j-i}
\end{align}
\begin{equation}
\begin{split}
\phi_i\left(\vartheta\right)& =\frac{1}{\lambda^{i+1}}\left(1-e^{-\lambda\vartheta}\sum_{r=0}^i\frac{\left(\lambda\vartheta\right)^r}{r!}\right)\\
& \quad -\frac{e^{-\mu\vartheta}}{\left(\lambda-\mu\right)^{i+1}}\left(1-e^{-\left(\lambda-\mu\right)\vartheta}\sum_{r=0}^i\frac{\left[\left(\lambda-\mu%
\right)\vartheta\right]^r}{r!}\right)
\end{split}
\end{equation}
and $\lambda,$ $\mu,$ and $\frac{\lambda}{b}$ are distinct.
\end{theorem}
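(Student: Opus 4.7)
The plan is to invert the Laplace--Stieltjes transform of $\tau_\rho$ provided by Corollary 3.4. Since $\tau_\rho\ge 0$, its CDF satisfies $\mathcal{L}\{F_{\tau_\rho}\}(\theta)=E[e^{-\theta\tau_\rho}]/\theta$, so I will compute this quotient, decompose it into a sum of simple rational pieces, and invert each piece via an Erlang--CDF identity and the shift theorem.

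I begin by substituting $d=(\lambda+b\theta)/(\lambda+\theta)$ into $\phi(1,0,\theta,\boldsymbol{0})$ from Corollary 3.4. Because $1-d=a\theta/(\lambda+\theta)$, the prefactor in front of the bracket collapses to
\[
\frac{a\lambda\mu}{(\lambda+b\theta)(\lambda+\theta)}\cdot\frac{1}{1-d}=\frac{\lambda\mu}{\theta(\lambda+b\theta)},
\]
and a short algebraic check gives $1+\frac{b\mu}{\lambda+b\theta}+\frac{\lambda\mu}{\theta(\lambda+b\theta)}=\frac{\theta+\mu}{\theta}$. Multiplying this by $\theta/(\mu+\theta)$ kills the leading constant and produces the clean reduction
\[
\mathcal{L}\{F_{\tau_\rho}\}(\theta)=\frac{\lambda\mu(1-K)(\lambda+b\theta)^{M-1}}{\theta(\mu+\theta)(\lambda+\theta)^M}+\lambda\mu e^{-\xi V}\sum_{j=0}^{M-2}\frac{(\xi V)^j}{j!}\,\frac{(\lambda+b\theta)^j}{\theta(\mu+\theta)(\lambda+\theta)^{j+1}}.
\]

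Next, I use the key identity $\lambda+b\theta=a\lambda+b(\lambda+\theta)$ to generate the binomial expansion
\[
(\lambda+b\theta)^r=\sum_{i=0}^{r}\binom{r}{i}(a\lambda)^i b^{r-i}(\lambda+\theta)^{r-i},
\]
which with $r=M-1$ supplies the coefficients $c_i$ and with $r=j$ the coefficients $d_i$. After dividing by the appropriate power of $\lambda+\theta$, the inversion problem reduces to inverting $\mu/[\theta(\mu+\theta)(\lambda+\theta)^{i+1}]$ for each $i$. I identify this inverse with $\phi_i(\vartheta)$ by noting that $1-e^{-\alpha\vartheta}\sum_{r=0}^{i}(\alpha\vartheta)^r/r!$ is the CDF of an Erlang$(i+1,\alpha)$ with Laplace transform $\alpha^{i+1}/[\theta(\theta+\alpha)^{i+1}]$; combining this with the shift rule $\mathcal{L}\{e^{-\mu\vartheta}f(\vartheta)\}(\theta)=\hat f(\theta+\mu)$ yields $\mathcal{L}\{\phi_i\}(\theta)=\frac{1}{\theta(\theta+\lambda)^{i+1}}-\frac{1}{(\theta+\mu)(\theta+\lambda)^{i+1}}=\mu/[\theta(\theta+\mu)(\theta+\lambda)^{i+1}]$, as needed. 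Distinctness of $\lambda$ and $\mu$ keeps $(\lambda-\mu)^{i+1}$ nonzero, and distinctness from $\lambda/b$ keeps the partial--fraction decomposition simple-pole; inverting term by term then produces the claimed double sum for $F_{\tau_\rho}(\vartheta)$.

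The main obstacle I anticipate is the first algebraic collapse: it is not transparent that $1+\frac{b\mu}{\lambda+b\theta}+\frac{\lambda\mu}{\theta(\lambda+b\theta)}$ simplifies to $(\theta+\mu)/\theta$, and without that cancellation an unwanted $1/\theta$ would survive the inversion and give a divergent answer. The second nontrivial ingredient is spotting the rewriting $\lambda+b\theta=a\lambda+b(\lambda+\theta)$: this is what forces the binomial expansion to produce the specific $c_i$ and $d_i$ stated in the theorem. Once these two observations are in place, the rest is a routine Laplace inversion built from the Erlang--CDF transform and the exponential shift.
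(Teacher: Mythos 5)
Your proof is correct and follows essentially the same route as the paper's: invert $E\left[e^{-\theta\tau_\rho}\right]/\theta$ starting from Corollary 3.4, expand $\left(\lambda+b\theta\right)^r$ via $\lambda+b\theta=a\lambda+b\left(\lambda+\theta\right)$ to generate $c_i$ and $d_i$, and recognize $\mu/\left[\theta\left(\theta+\mu\right)\left(\theta+\lambda\right)^{i+1}\right]$ as the transform of $\phi_i$. The only organizational difference is that you carry out the cancellation $1+\frac{b\mu}{\lambda+b\theta}+\frac{\lambda\mu}{\theta\left(\lambda+b\theta\right)}=\frac{\theta+\mu}{\theta}$ algebraically before inverting, whereas the paper inverts $\frac{1}{\theta}-\frac{1}{\mu+\theta}\left(1+\frac{b\mu}{\lambda+b\theta}\right)$ first and lets those terms cancel against the inversion of its expression (3.34); as a consequence the pole at $\theta=-\lambda/b$ never actually arises in your version, so the distinctness of $\lambda/b$ is not needed for your argument.
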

\begin{proof}
We will use the LST $E\left[e^{-\theta\tau_\rho}\right]$ from Corollary 3.4 and
\begin{equation*}
F_{\tau_\rho}\left(\vartheta\right)=\mathcal{L}_\theta^{-1}\left\{\frac{E\left[e^{-\theta\tau_\rho}\right]}{\theta}\right\}\left(\vartheta\right)
\end{equation*}
So we have
\begin{equation*}
\begin{split}
F_{\tau_\rho}\left(\vartheta\right)&=\mathcal{L}_\theta^{-1}\left\{\frac{1}{\theta}-\frac{1}{\theta}\frac{\theta}{\mu+\theta}\left(1+\frac{b\mu}{\lambda+b\theta}+\frac{a\lambda\mu\phi\left(1,0,\theta,\boldsymbol{0}\right)}{\left(\lambda+b\theta\right)\left(\lambda+\theta\right)}\right)\right\}\left(\vartheta\right)\\
&=1-\frac{\lambda e^{-\mu\vartheta}-b\mu e^{-\frac{\lambda}{b}\vartheta}}{\lambda-b\mu}-\frac{a\lambda\mu}{b}\mathcal{L}_\theta^{-1}\left\{\frac{\phi\left(1,0,\theta,\boldsymbol{0}\right)}{\left(\theta+\mu\right)\left(\theta+\lambda\right)\left(\theta+\frac{\lambda}{b}\right)}\right\}\left(\vartheta\right)
\end{split}
\end{equation*}
and $1-d=\frac{\lambda+\theta-\lambda-b\theta}{\lambda+\theta}=\frac{a\theta}{\lambda+\theta}$. Using this and the formula above, we can write $\phi\left(1,0,\theta,\boldsymbol{0}\right)$ in a more convenient form
\begin{equation*}
\phi\left(1,0,\theta,\boldsymbol{0}\right)=\frac{\theta+\lambda}{a\theta}-\frac{1-K}{a}\frac{\left(b\theta+\lambda\right)^M}{\theta\left(\theta+\lambda\right)^{M-1}}-\frac{e^{-\xi V}}{a}\sum_{j=0}^{M-2}\frac{\left(\xi V\right)^j}{j!}\frac{\left(b\theta+\lambda\right)^{j+1}}{\theta\left(\theta+\lambda\right)^j}
\end{equation*}
Thus, the remaining inverse transform can be separated into three parts
\begin{align}
&\frac{1}{a}\mathcal{L}_\theta^{-1}\left\{\frac{1}{\theta\left(\theta+\mu\right)\left(\theta+\frac{\lambda}{b}\right)}\right\}\left(\vartheta\right)\\
&-b\frac{1-K}{a}\mathcal{L}_\theta^{-1}\left\{\frac{\left(b\theta+\lambda\right)^{M-1}}{\theta\left(\theta+\mu\right)\left(\theta+\lambda\right)^M}\right\}\left(\vartheta\right)\\
&-\frac{be^{-\xi V}}{a}\sum_{j=0}^{M-2}\frac{\left(\xi V\right)^j}{j!}\mathcal{L}_\theta^{-1}\left\{\frac{\left(b\theta+\lambda\right)^j}{\theta\left(\theta+\mu\right)\left(\theta+\lambda\right)^{j+1}}\right\}\left(\vartheta\right)
\end{align}
Suppose $\mu,\lambda,$ and $\frac{\lambda}{b}$ are distinct, then we can give a more explicit form of the inversion. With this, we can do the inversion in (3.34):

\begin{equation*}
\mathcal{L}_\theta^{-1}\left\{\frac{1}{\theta\left(\theta+\mu\right)\left(\theta+\frac{\lambda}{b}\right)}\right\}\left(\vartheta\right)=\frac{b}{\mu\lambda}\left(1-\frac{\lambda e^{-\mu\vartheta}-b\mu e^{-\frac{\lambda}{b}\vartheta}}{\lambda-b\mu}\right)
\end{equation*}

Next, the inversion from (3.35)
$\mathcal{L}_\theta^{-1}\left\{\frac{\left(b\theta+\lambda\right)^{M-1}}{\theta\left(\theta+\mu\right)\left(\theta+\lambda\right)^M}\right\}\left(\vartheta\right)=e^{-\lambda\vartheta}\mathcal{L}_\theta^{-1}\left\{\frac{\left(b\theta+a\lambda\right)^{M-1}}{\left(\theta-\lambda\right)\left(\theta+\mu-\lambda\right)\theta^M}\right\}\left(\vartheta\right)$
\begin{align*}
& =\frac{e^{-\lambda\vartheta}}{\mu}\mathcal{L}_\theta^{-1}\left\{%
\sum_{i=0}^{M-1}c_i\left[\frac{1}{\theta^{i+1}\left(\theta+\mu-\lambda%
\right)}-\frac{1}{\theta^{i+1}\left(\theta-\lambda\right)}\right]\right\}%
\left(\vartheta\right)\\
\begin{split}
& =\frac{1}{\mu}\sum_{i=0}^{M-1}c_i\Biggl[\frac{1}{\lambda^{i+1}}\left(1-e^{-\lambda\vartheta}\sum_{r=0}^i\frac{\left(\lambda\vartheta\right)^r}{r!}\right)\\
&\quad\quad\quad\quad\quad\quad -\frac{e^{-\mu\vartheta}}{\left(\lambda-\mu\right)^{i+1}}\left(1-e^{-\left(\lambda-\mu\right)\vartheta}\sum_{r=0}^i\frac{\left[\left(\lambda-\mu\right)\vartheta\right]^r}{r!}\right)\Biggr]
\end{split}
\end{align*}
The inversion of (3.36) is the same as the previous with $M=j+1$. Combining the completed inverse transform with these yields
\begin{equation*}
F_{\tau_\rho}\left(\vartheta\right)=\lambda\left(1-K\right)\sum_{i=0}^{M-1}c_i\phi_i\left(\vartheta\right)+\lambda e^{-\xi V}\sum_{j=0}^{M-2}\frac{\left(\xi V\right)^j}{j!}\sum_{i=0}^jd_i\phi_i\left(\vartheta\right)
\end{equation*}
\end{proof}


\bibliographystyle{plain}
\bibliography{ORSN}
\nocite{Agarwal:2005uq,Agarwal:2004uq,Agarwal:2004fk,arbor-report,Bingham:2001kx,Bollobas:2001vn,Callaway:2000ys,Costa:2008zr,Crescenzo:2009ly,Dshalalow:1997ve,Dshalalow:2001qf,Dshalalow:2005bh,Dshalalow:2006oq,Dshalalow:2008dq,Dshalalow:2009cr,Dshalalow:2044nx,Erdos:1959kl,Erdos:1960tg,Kadankov:2005hc,Kadankova:2007ij,Kalisky:2006bs,Kyprianou:2003fv,Mellander:1992dz,Muzy:2007fu,Newman:2001kl,Newman:2002qa,Newman:2003mi,Newman:2004ff,Newman:2004pi,Newman:2006lh,Takacs:1978fu,Telcs:1989ye,Wu:2982qo}

\appendix
\begin{appendices}
\section{Proof of the Value of $\gamma_k\left(z,v,\theta,\boldsymbol{\beta}\right)$}

\begin{lemma}
For $k\in\mathbb{N}$,
\begin{equation}
\gamma_k\left(z,v,\theta,\boldsymbol{\beta}\right)=L\left[\theta+\lambda-\lambda g\left(zl\left(v\right)m\left(\boldsymbol{\beta}\right)\right)\right]
\end{equation}
\end{lemma}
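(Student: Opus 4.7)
The plan is to prove the lemma by conditioning first on $\Delta_k$ and then on the number of Poisson attacks that fall inside the $k$th observation epoch, exploiting the iid structure of the marks along with the Poisson thinning/compounding that is already packaged in the transform (2.4).

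First I would fix $k\geq 1$ and write $\gamma_k(z,v,\theta,\boldsymbol{\beta})=E\bigl[e^{-\theta\Delta_k}\,E[\,z^{X_k}e^{-vY_k}e^{\boldsymbol{\beta\cdot\pi}_k}\mid\Delta_k]\bigr]$, using independence of $\Delta_k$ from the attack process. Next I would condition the inner expectation on $\Delta_k=t$ and on the Poisson count $\Pi((\tau_{k-1},\tau_k])=N$. By the Poisson assumption (3.1), given $\Delta_k=t$ the count $N$ is Poisson with parameter $\lambda t$, and given $N=n$ the increments decompose as $X_k=\sum_{i=1}^n n_i$, $Y_k=\sum_{i=1}^n w_i$, $\boldsymbol{\pi}_k=\sum_{i=1}^n\boldsymbol{p}_i$, with $(n_i,w_i,\boldsymbol{p}_i)$ iid copies of $(n_1,w_1,\boldsymbol{p}_1)$.

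Then by the iid factorization and (2.3)--(2.4),
\begin{equation*}
E\bigl[z^{X_k}e^{-vY_k}e^{\boldsymbol{\beta\cdot\pi}_k}\mid \Delta_k=t,\,N=n\bigr]=\gamma(z,v,\boldsymbol{\beta})^n=\bigl[g(zl(v)m(\boldsymbol{\beta}))\bigr]^n.
\end{equation*}
Averaging over $N\sim\mathrm{Poisson}(\lambda t)$ using the Poisson PGF gives
\begin{equation*}
E\bigl[z^{X_k}e^{-vY_k}e^{\boldsymbol{\beta\cdot\pi}_k}\mid\Delta_k=t\bigr]=\exp\!\bigl[-\lambda t\,(1-g(zl(v)m(\boldsymbol{\beta})))\bigr].
\end{equation*}

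Finally I would insert this back, multiply by $e^{-\theta t}$, and integrate against the distribution of $\Delta_k$. The exponents combine into $-t(\theta+\lambda-\lambda g(zl(v)m(\boldsymbol{\beta})))$, so by definition of the LST $L$ of $\Delta_k$,
\begin{equation*}
\gamma_k(z,v,\theta,\boldsymbol{\beta})=L\bigl[\theta+\lambda-\lambda g(zl(v)m(\boldsymbol{\beta}))\bigr],
\end{equation*}
which is the claimed identity. There is no real obstacle: the only delicate point is justifying the conditioning and the use of (2.4) in the compound form above, which reduces to the independence of $\Delta_k$ from the marked point process $\Pi$ and the iid assumption on the marks (and the conditional independence of $w_k$ and $\boldsymbol{p}_k$ given $n_k$ already built into (2.4)). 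The analogous statement for $k=0$ follows verbatim, with $L_0$ in place of $L$.
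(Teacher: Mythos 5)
Your proposal is correct and follows essentially the same route as the paper's proof: condition on $\Delta_k$, recognize the Poisson count of strikes in the epoch, use the iid compound structure packaged in $\gamma(z,v,\boldsymbol{\alpha})=g(zl(v)m(\boldsymbol{\alpha}))$ to reduce the inner expectation to $g(zl(v)m(\boldsymbol{\beta}))^N$, apply the Poisson PGF, and identify the resulting expectation as the LST $L$ evaluated at $\theta+\lambda-\lambda g(zl(v)m(\boldsymbol{\beta}))$. The only difference is cosmetic: the paper expands the per-strike marks $w_k=w_{k1}+\cdots+w_{kn_k}$ explicitly before invoking the iid assumption, while you cite the already-derived transform (2.4) directly.
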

\begin{proof}
Since the increments $\left(X_k,Y_k,\boldsymbol{\pi}_k\right)$ are iid for $k\in\mathbb{N}$ and stationary, and the observation intervals $\Delta_k=\tau_k-\tau_{k-1}$ are iid for $k\in\mathbb{N}$, we have a common joint functional for all $k\in\mathbb{N}$, we have
\begin{equation*}
\begin{split}
\gamma\left(z,v,\theta,\boldsymbol{\beta}\right)& =E\left[z^{X_1}e^{-vY_1}e^{-\theta\Delta_1}e^{\boldsymbol{\beta\cdot\pi}_1}\right]=E\left[e^{-\theta\Delta_1}E\left[z^{X_1}e^{-vY_1}e^{\boldsymbol{\beta\cdot\pi}_1}\Big|\Delta_1\right]\right]
\end{split}
\end{equation*}
Let $J=\Lambda\left(\Delta_1\right)$ be the number of strikes during an observation epoch $(\tau_0,\tau_1]$. Then, 
\begin{equation*}
\begin{split}
& =E\left[e^{-\theta\Delta_1}E\left[\left(z^{n_1}e^{-vw_1}e^{\boldsymbol{\beta\cdot p}_1}\right)\times\cdots\times\left(z^{n_J}e^{-vw_J}e^{\boldsymbol{\beta\cdot p}_J}\right)\Big|\Delta_1\right]\right]\\
& =E\biggl[e^{-\theta\Delta_1}E\biggl[z^{n_1}e^{-v\left(w_{11}+...+w_{n_11}\right)}e^{\boldsymbol{\beta\cdot}\left(\boldsymbol{p}_{11}+...\boldsymbol{p}_{n_11}\right)}\times\cdots\\
& \quad\quad\quad\quad\quad\quad\quad\times z^{n_J}e^{-v\left(w_{1J}+...+w_{n_JJ}\right)}e^{\boldsymbol{\beta\cdot}\left(\boldsymbol{p}_{1J}+...\boldsymbol{p}_{n_JJ}\right)}\bigg|\Delta_1\biggr]\biggr]
\end{split}
\end{equation*}
Since $w_{jk}$'s and $\boldsymbol{p}_{jk}$'s are iid for $k\geq 1$,
\begin{equation*}
=E\left[e^{-\theta\Delta_1}E\left[g\left(zl\left(v\right)m\left(\boldsymbol{\beta}\right)\right)^J\Big|\Delta_1\right]\right]
\end{equation*}
Since $\Lambda$ is a Poisson counting measure, $J=\Lambda\left(\Delta_1\right)\in\left[\text{Poisson(}\lambda\Delta_1\text{)}\right]$, so we know$\,E\left[z^J\Big|\Delta_1\right]=e^{\lambda\Delta_1\left(z-1\right)}$ that yields
\begin{equation*}
\begin{split}
\gamma\left(z,v,\theta,\boldsymbol{\beta}\right)& =E\left[e^{-\theta\Delta_1}e^{\lambda\Delta_1\left[g\left(zl\left(v\right)m\left(\boldsymbol{\beta}\right)\right)-1\right]}\right]=E\left[e^{-\left[\theta+\lambda-\lambda g\left(zl\left(v\right)m\left(\boldsymbol{\beta}\right)\right)\right]\Delta_1}\right]\\
&=L\left[\theta+\lambda-\lambda g\left(zl\left(v\right)m\left(\boldsymbol{\beta}\right)\right)\right]
\end{split}
\end{equation*}
where $L$ is the LST of $\Delta_1$
\end{proof}

Similarly, $\gamma_0\left(z,v,\theta,\boldsymbol{\beta}\right)=L_0\left[\theta+\lambda-\lambda g\left(zl\left(v\right)m\left(\boldsymbol{\beta}\right)\right)\right]$ where $L_0$ is the LST of $\Delta_0$.

\section{Simulation}

The following is a high-level overview of one simulation of the process until the first observed passage time for a particular set of parameters $\left(\lambda,\mu,a,\xi,M,V\right)$:

\begin{algorithmic}
\State cumulativeNodeLoss $\gets 0$
\State cumulativeWeightLoss $\gets 0$
\State cumulativeTimePassed $\gets 0$

\While{cumulativeNodeLoss $<M$ and cumulativeWeightLoss $<V$}\
	\State observationTime $\gets$ Exponential($\mu$) R.V.
	\State strikesInEpoch $\gets$ Poisson($\lambda *$observationTime) R.V.\\
	\State nodesLostInEpoch $\gets 0$
	\State weightLostInEpoch $\gets 0$\\
	
	\State nodesLostInEpoch $\gets \sum_{i=1}^{\text{StrikesInEpoch}}X_i$ ($X_i\in\left[\text{Geometric}\left(a\right)\right]$)
	\State weightLostInEpoch $\gets \sum_{i=1}^{%
\text{StrikesInEpoch}}\sum_{j=1}^{X_i}Y_{ij}$ ($Y_{ij}\in\left[\text{Exponential}\left(\xi\right)\right]$)\\

	\State cumulativeNodeLoss $\gets$ cumulativeNodeLoss + nodesLostInEpoch
	\State cumulativeWeightLoss $\gets$ cumulativeWeightLoss + weightLostInEpoch
	\State cumulativeTimePassed $\gets$ cumulativeTimePassed + observationTime\\
	
	\If {cumulativeNodeLoss $\geq M$ or cumulativeWeightLoss $\geq W$}
		\State CrossingValues[1]=cumulativeTimePassed
		\State CrossingValues[2]=cumulativeNodeLoss
		\State CrossingValues[3]=cumulativeWeightLoss
	\EndIf
\EndWhile
\end{algorithmic}

In other words, we generate an observation time, generate the number of attacks within the observation time, generate the number of nodes lost in each attack, generate a weight for each node lost, and repeat with successive observation epochs until the first threshold is crossed, at which time we record the crossing values of each component.

While this code generates the observation before the attacks rather than generating attacks and then observing them (which we are actually modeling), the independent increments property of the attack process (Poisson point process) renders this strategy probabilistically equivalent and yields simpler code.

In the numbers provided in Section 3, we generate a sample by running the simulation many times and average the crossing values, each of which are iid random variables with finite mean, so each converges almost surely to the true mean by the strong law of large numbers.
\section{Validation that $F_{\tau_\rho}$ is a PDF}

We will next prove that $F_{\tau_\rho}$ of Theorem 3.7 is a PDF as some supporting confirmation of the many calculations leading up to it.

Since $\phi_i\left(0\right)=0$, $F_{\tau_\rho}\left(0\right)=0$. Since $\lim_{\vartheta\rightarrow\infty}e^{-\lambda\vartheta}\vartheta^r=0\,$for all $r\in\mathbb{N}_0$, we have
\begin{equation}
\lim_{\vartheta\rightarrow\infty}\phi_i\left(\vartheta\right)=\frac{1}{\lambda^{i+1}}\left(1-0-0\right)-\frac{1}{\left(\lambda-\mu\right)^{i+1}}\left(0-0-0\right)=\frac{1}{\lambda^{i+1}}
\end{equation}
Since the rest of $F_{\tau_\rho}\left(\vartheta\right)$ is independent of $\vartheta$, we find
\begin{equation}
\begin{split}
\lim_{\vartheta\rightarrow\infty}\sum_{i=0}^{M-1}c_i\phi_i\left(\vartheta\right)&=\sum_{i=0}^{M-1}\binom{M-1}{i}\frac{\left(a\lambda\right)^ib^{M-1-i}}{\lambda^{i+1}}\\
& =\frac{1}{\lambda}\sum_{i=0}^{M-1}\binom{M-1}{i}a^ib^{M-1-i}=\frac{\left(a+b\right)^{M-1}}{\lambda}=\frac{1}{\lambda}
\end{split}
\end{equation}
and the same follows for $\sum_{i=0}^jd_i\phi_i\left(\vartheta\right)$, so we have
\begin{equation}
\lim_{\vartheta\rightarrow\infty}F_{\tau_\rho}\left(\vartheta\right)=\left(1-K\right)+K=1
\end{equation}
\indent Lastly, we show that $F_{\tau_\rho}\left(\vartheta\right)$ is monotone increasing. First, note that $K$ is the probability that a Poisson$\left(\xi V\right)$ R.V. is less than or equal to $M-2$, so $0<1-K<1$. Thus, if $\phi_i\left(\vartheta\right)$ is monotone increasing (which would imply $\phi_i\left(\vartheta\right)$ is nonnegative since $\phi_i\left(0\right)=0$) for each $i$, then $F_{\tau_\rho}\left(\vartheta\right)$ is monotone increasing.

\begin{lemma}
$\phi_i\left(\vartheta\right)$ is monotone increasing.
\end{lemma}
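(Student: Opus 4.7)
The plan is to re-express $\phi_i(\vartheta)$ as the inverse Laplace transform of a single rational function whose factorization makes nonnegativity of the derivative transparent, thereby reducing monotonicity to a one-line observation.

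First I would verify the identity
\begin{equation*}
\phi_i(\vartheta)=\mathcal{L}_\theta^{-1}\!\left\{\frac{\mu}{\theta(\theta+\mu)(\theta+\lambda)^{i+1}}\right\}\!(\vartheta).
\end{equation*}
This comes from the trivial partial fraction $\frac{\mu}{\theta(\theta+\mu)}=\frac{1}{\theta}-\frac{1}{\theta+\mu}$, which gives
\begin{equation*}
\frac{\mu}{\theta(\theta+\mu)(\theta+\lambda)^{i+1}}=\frac{1}{\theta(\theta+\lambda)^{i+1}}-\frac{1}{(\theta+\mu)(\theta+\lambda)^{i+1}}.
\end{equation*}
Inverting the first term using $\mathcal{L}_\theta^{-1}\{1/(\theta+\lambda)^{i+1}\}(s)=s^i e^{-\lambda s}/i!$ and the integration rule $\mathcal{L}_\theta^{-1}\{F(\theta)/\theta\}(\vartheta)=\int_0^\vartheta \mathcal{L}_\theta^{-1}\{F\}(s)\,ds$ gives an incomplete gamma integral that collapses to $\frac{1}{\lambda^{i+1}}\bigl(1-e^{-\lambda\vartheta}\sum_{r=0}^i \frac{(\lambda\vartheta)^r}{r!}\bigr)$, which is the first summand in the stated formula for $\phi_i$. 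Inverting the second term by the convolution theorem produces $\int_0^\vartheta e^{-\mu(\vartheta-s)}\frac{s^i e^{-\lambda s}}{i!}\,ds$, which after the same gamma-integral evaluation matches (with sign) the second summand.

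Having established this representation, the monotonicity claim follows in one step: applying the integration rule once more,
\begin{equation*}
\phi_i(\vartheta)=\mu\int_0^\vartheta h(s)\,ds,\qquad h(s)=\mathcal{L}_\theta^{-1}\!\left\{\frac{1}{(\theta+\mu)(\theta+\lambda)^{i+1}}\right\}\!(s)=\int_0^s e^{-\mu(s-u)}\frac{u^i e^{-\lambda u}}{i!}\,du.
\end{equation*}
The integrand $h(s)$ is a convolution of two manifestly nonnegative functions on $[0,\infty)$, so $h(s)\ge 0$ for all $s\ge 0$, and hence $\phi_i'(\vartheta)=\mu\,h(\vartheta)\ge 0$ (in fact strictly positive on $(0,\infty)$). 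This is exactly monotone increasing on $[0,\infty)$, as required.

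There is no real obstacle; the only point that could mislead is cosmetic. In the form displayed in Theorem 3.7, $\phi_i$ involves the factor $(\lambda-\mu)^{-(i+1)}$, which appears to change sign according to the ordering of $\lambda$ and $\mu$. The representation above shows that this is an artifact of the partial-fraction expansion: the underlying object is a convolution of a density of $\mathrm{Exp}(\mu)$ with a density of $\mathrm{Gamma}(i+1,\lambda)$, so positivity is preserved regardless of whether $\lambda>\mu$ or $\lambda<\mu$, provided only that $\lambda,\mu>0$ and $\lambda\ne\mu$—which is precisely the standing hypothesis in Theorem 3.7.
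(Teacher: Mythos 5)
Your proof is correct, and it takes a genuinely different route from the paper's. The paper differentiates the explicit formula for $\phi_i$ term by term, observes that two of the three resulting groups cancel (they collapse to $\pm e^{-\lambda\vartheta}\vartheta^i/i!$), and then establishes nonnegativity of the surviving term $\frac{\mu e^{-\mu\vartheta}}{(\lambda-\mu)^{i+1}}\bigl(1-e^{-(\lambda-\mu)\vartheta}\sum_{r=0}^i\frac{[(\lambda-\mu)\vartheta]^r}{r!}\bigr)$ by a case analysis on $\lambda>\mu$ versus $\lambda<\mu$, the latter split further by the parity of $i$ via Taylor's theorem with Lagrange remainder. You instead identify $\phi_i$ as $\mathcal{L}_\theta^{-1}\{\mu/(\theta(\theta+\mu)(\theta+\lambda)^{i+1})\}$ --- an identity I have checked against (3.33) using the incomplete-gamma evaluation $\int_0^\vartheta \frac{s^ie^{-cs}}{i!}\,ds=\frac{1}{c^{i+1}}\bigl(1-e^{-c\vartheta}\sum_{r=0}^i\frac{(c\vartheta)^r}{r!}\bigr)$, which holds for every $c\neq 0$ including $c=\lambda-\mu<0$ --- so that $\phi_i'(\vartheta)$ becomes a convolution of the $\mathrm{Exp}(\mu)$ density with the $\mathrm{Gamma}(i+1,\lambda)$ density and is manifestly nonnegative. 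Your approach buys several things: it eliminates the case analysis entirely by exposing the sign ambiguity of $(\lambda-\mu)^{-(i+1)}$ as an artifact of partial fractions; it shows $\lambda^{i+1}\phi_i$ is the distribution function of $E+G$ with $E\sim\mathrm{Exp}(\mu)$ and $G\sim\mathrm{Gamma}(i+1,\lambda)$ independent, which delivers monotonicity, $\phi_i(0)=0$, and $\lim_{\vartheta\to\infty}\phi_i(\vartheta)=\lambda^{-(i+1)}$ (equation (C.1)) all at once; and it is more natural in context, since $\phi_i$ arose in Theorem 3.7 precisely as such an inverse Laplace transform. The paper's computation, for its part, is self-contained at the level of elementary calculus and does not require re-deriving the Laplace representation. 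The only point to be careful about, which you handle correctly, is that the closed form of $\phi_i$ presupposes $\lambda\neq\mu$, the standing hypothesis of Theorem 3.7.
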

\begin{proof}
First, we find the derivative with respect to $\vartheta$
\begin{equation}
\begin{split}
\phi_i'\left(\vartheta\right)& =\frac{1}{\lambda^{i+1}}\left(\lambda e^{-\lambda\vartheta}\sum_{r=0}^i\frac{\left(\lambda\vartheta\right)^r}{r!}-e^{-\lambda\vartheta}\sum_{r=1}^i\frac{\lambda^r}{r!}r\vartheta^{r-1}\right)\\
& \quad +\frac{\mu e^{-\mu\vartheta}}{\left(\lambda-\mu\right)^{i+1}}\left(1-e^{-\left(\lambda-\mu\right)\vartheta}\sum_{r=0}^i\frac{\left[\left(\lambda-\mu\right)\vartheta\right]^r}{r!}\right)\\
& \quad -\frac{e^{-\mu\vartheta}}{\left(\lambda-\mu\right)^{i+1}}\biggl(\left(\lambda-\mu\right)e^{-\left(\lambda-\mu\right)\vartheta}\sum_{r=0}^i\frac{\left[\left(\lambda-\mu\right)\vartheta\right]^r}{r!}\\
& \quad\quad\quad\quad\quad\quad\quad\quad -e^{-\left(\lambda-\mu\right)\vartheta}\sum_{r=1}^i\frac{\left(\lambda-\mu\right)^r}{r!}r\vartheta^{r-1}\biggr)
\end{split}
\end{equation}
Clearly $\lim_{\vartheta\rightarrow\infty}\phi_i'\left(\vartheta\right)=0\,$ as the terms of the type $e^{-c\vartheta}\,$(with $c\geq 0$) dominate the polynomial terms. We will consider the first set of parentheses first
\begin{align*}
&\frac{1}{\lambda^{i+1}}\left(\lambda e^{-\lambda\vartheta}\sum_{r=0}^i\frac{\left(\lambda\vartheta\right)^r}{r!}-e^{-\lambda\vartheta}\sum_{r=1}^i\frac{\lambda^r}{r!}r\vartheta^{r-1}\right)\\
&\quad =\frac{\lambda e^{-\lambda\vartheta}}{\lambda^{i+1}}\left(\sum_{r=0}^i\frac{\left(\lambda\vartheta\right)^r}{r!}-\lambda\sum_{r=0}^{i-1}\frac{\left(\lambda\vartheta\right)^r}{r!}\right)=\frac{e^{-\lambda\vartheta}}{\lambda^i}\frac{\left(\lambda\vartheta\right)^i}{i!}=\frac{e^{-\lambda\vartheta}\vartheta^i}{i!}
\end{align*}
The third set of parenthesis can be evaluated similarly
\begin{align*}
&-\frac{e^{-\mu\vartheta}}{\left(\lambda-\mu\right)^{i+1}}\Biggl(\left(\lambda-\mu\right)e^{-\left(\lambda-\mu\right)\vartheta}\sum_{r=0}^i\frac{\left[\left(\lambda-\mu\right)\vartheta\right]^r}{r!}-e^{-\left(\lambda-\mu\right)\vartheta}\sum_{r=1}^i\frac{\left(\lambda-\mu\right)^r}{r!}r\vartheta^{r-1}\Biggr)\\
&\quad\quad =-\frac{e^{-\mu\vartheta}e^{-\left(\lambda-\mu\right)\vartheta}%
\vartheta^i}{i!}=-\frac{e^{-\lambda\vartheta}\vartheta^i}{i!}
\end{align*}
These sum to zero, leaving only the second set of parentheses to consider. If we can show the remaining term is nonnegative for finite $\vartheta$, the proof will be complete:
\begin{equation}
\frac{\mu e^{-\mu\vartheta}}{\left(\lambda-\mu\right)^{i+1}}\left(1-e^{-\left(\lambda-\mu\right)\vartheta}\sum_{r=0}^i\frac{\left[\left(\lambda-\mu\right)\vartheta\right]^r}{r!}\right)
\end{equation}
\noindent\underline{Case 1 $\lambda>\mu$}: In this case, $\frac{\mu
e^{-\mu\vartheta}}{\left(\lambda-\mu\right)^{i+1}}>0$. We also have
\begin{equation*}
1\leq\sum_{r=0}^i\frac{\left[\left(\lambda-\mu\right)\vartheta\right]^r}{r!}<e^{\left(\lambda-\mu\right)\vartheta}
\end{equation*}
as above since $\lambda-\mu>0$, and so
\begin{align*}
& 0<e^{-\left(\lambda-\mu\right)\vartheta}\leq e^{-\left(\lambda-\mu\right)\vartheta}\sum_{r=0}^i\frac{\left[\left(\lambda-\mu\right)\vartheta\right]^r}{r!}<1\\
& 1-e^{-\left(\lambda-\mu\right)\vartheta}\sum_{r=0}^i\frac{\left[\left(\lambda-\mu\right)\vartheta\right]^r}{r!}>0
\end{align*}
Thus, the whole term is nonnegative.\vspace{5mm}

\noindent\underline{Case 2 $\lambda<\mu$}: By Taylor's theorem,
\begin{equation*}
e^{\left(\lambda-\mu\right)\vartheta}=\sum_{r=0}^i\frac{\left[\left(\lambda-\mu\right)\vartheta\right]^r}{r!}+\frac{\left(-1\right)^{i+1}\left[\left(\mu-\lambda\right)\vartheta\right]^{i+1}e^{\left(\lambda-\mu\right)\omega}}{\left(i+1\right)!}
\end{equation*}
for some $0<\omega<\vartheta$.\vspace{5mm}

\noindent\underline{Case 2.1 $i$ is even}: The error term is negative in this case, so the partial sum is more than $e^{\left(\lambda-\mu\right)\vartheta}$, implying
\begin{align*}
&e^{-\left(\lambda-\mu\right)\vartheta}\sum_{r=0}^i\frac{\left[\left(\lambda-\mu\right)\vartheta\right]^r}{r!}>1\\
& 1-e^{-\left(\lambda-\mu\right)\vartheta}\sum_{r=0}^i\frac{\left[\left(\lambda-\mu\right)\vartheta\right]^r}{r!}<0
\end{align*}

and $\frac{\mu e^{-\mu\vartheta}}{\left(\lambda-\mu\right)^{i+1}}<0$, thus implying the whole term is positive.\vspace{5mm}

\noindent\underline{Case 2.2 $i$ is odd}: The error term is positive in this case, so the partial sum is less than $e^{\left(\lambda-\mu\right)\vartheta}$, which implies

\begin{align*}
& 0<e^{-\left(\lambda-\mu\right)\vartheta}\sum_{r=0}^i\frac{\left[\left(\lambda-\mu\right)\vartheta\right]^r}{r!}<1\\
& 1-e^{-\left(\lambda-\mu\right)\vartheta}\sum_{r=0}^i\frac{\left[\left(\lambda-\mu\right)\vartheta\right]^r}{r!}>0
\end{align*}
and $\frac{\mu e^{-\mu\vartheta}}{\left(\lambda-\mu\right)^{i+1}}>0$, thus implying the whole term is positive. 
\end{proof}
Altogether, we have verified $F_{\tau_\rho}\left(\vartheta\right)$ is a probability distribution function.
\end{appendices}

\end{document}